\documentclass[letterpaper, 10pt, dvipsnames]{article}

%--Fonts--%
\usepackage{type1ec}
\usepackage[T1]{fontenc}
\usepackage[utf8]{inputenc}    
\usepackage[english]{babel}
\usepackage[normalem]{ulem}
\usepackage{dsfont, bm, pifont, mathrsfs}
\usepackage{stmaryrd}
\usepackage{bbm}
\usepackage{enumitem}
\usepackage{bold-extra}
\setcounter{tocdepth}{1}
%--Images--%
\usepackage{float, graphicx, wrapfig, tikz}
\usetikzlibrary{positioning, fit}
\usepackage[margin=0.8in]{subcaption}
\usepackage[font=small, margin=0.6in]{caption}

%--Math--%	
\usepackage{amsthm, amsmath, amsfonts, amssymb, mathrsfs, mathtools}
\usepackage[alphabetic]{amsrefs}
%removed alphabetic option July 11

%--Editing--%
\usepackage{hyperref, xcolor, titlesec} 
\hypersetup{colorlinks = true, urlcolor = RoyalBlue!70!Black,linkcolor=BrickRed, citecolor=RoyalBlue, bookmarksopen = true}
\usepackage[nomarginpar]{geometry}
\geometry{verbose, lmargin=0.8in, rmargin=0.8in,bmargin=1.25in}
\numberwithin{equation}{section}
\usepackage[boxsize = .3em]{ytableau}
%---References---%
\usepackage{cleveref}
%---Stylistic Conventions---%

%---New Commands---%

\DeclareMathOperator{\Tr}{Tr}

%---Basic Sets and Numbers---%
\newcommand{\I}{\mathrm{I}}
\newcommand{\II}{\mathrm{II}}
\newcommand{\III}{\mathrm{III}}
\newcommand{\IV}{\mathrm{IV}}
\newcommand{\N}{\mathbb{N}}

\newcommand{\Rbb}{\mathbb{R}}

\let\d\relax
\newcommand{\d}{\mathrm{d}}

%---Calligraphic Letters---%
\newcommand{\A}{\mathcal{A}}

\newcommand{\B}{\mathcal{B}}

\newcommand{\F}{\mathcal{F}}
\newcommand{\M}{\mathcal{M}}

\newcommand{\R}{\mathcal{R}}

\newcommand{\T}{\mathcal{T}}

\newcommand{\G}{\mathcal{G}}

\newcommand{\W}{\mathcal{W}}

%---Bold Vectors---%

\renewcommand{\u}{\mathbf{u}}
\renewcommand{\v}{\mathbf{v}}
\newcommand{\w}{\mathbf{w}}
\newcommand{\x}{\mathbf{x}}

\newcommand{\db}{\mathbf{d}}

\newcommand{\X}[1]{X^{(#1)}}
%---Greek and Special Letters---%

\renewcommand{\epsilon}{\varepsilon}
\renewcommand{\phi}{\varphi}

%---Matrix/Operator Related---%

\let\O\relax
\newcommand{\O}[1]{\mathcal{O}\left(#1\right)}

%---Probability---%

\newcommand{\E}{\mathds{E}}
%\newcommand{\E}[1]{\mathds{E}\left[#1\right]}

%---Formatting and Colors---%

%---Special Definitions---%

\newcommand{\unn}[2]{\left[\!\left[#1,#2\right]\!\right]}

%---Theorem Environments---%
\newtheorem{ccounter}{ccounter}[section]
\newtheorem{theorem}[ccounter]{Theorem}
\newtheorem{lemma}[ccounter]{Lemma}

\newtheorem{definition}[ccounter]{Definition}
\newtheorem{proposition}[ccounter]{Proposition}
\newtheorem{assumption}[ccounter]{Assumption}

\theoremstyle{definition}

\titleformat{\section}[block]{\normalfont\filcenter}{\large\Roman{section}.}{.7em}{\large\scshape}

\titleformat{\subsection}[runin]{\normalfont}{\large \bf \thesubsection .}{.5em}{\bf}[.]
\titleformat{\subsubsection}[runin]{\normalfont}{\bf \thesubsubsection .}{.5em}{\bf}[.]
\titleformat*{\paragraph}{\itshape\mdseries}

\begin{document}
\tikzset{every node/.style={circle, minimum size=.1cm, inner sep = 2pt, scale=1}}

\title{\vspace{-5ex}\bfseries\scshape{Hadamard product of independent random sample covariance matrices with correlation structure}}
\author{L. \textsc{Benigni}\thanks{Supported by NSERC RGPIN 2023-03882 \& DGECR 2023-00076 and FRQNT Etablissement de la Rel\`eve Professorale 364387}\\\vspace{-0.15cm}\footnotesize{\it{Universit\'e de Montr\'eal}}\\\footnotesize{\it{lucas.benigni@umontreal.ca}}\and Z. \textsc{Zaklani}\\\vspace{-0.15cm}\footnotesize{\it{Universit\'e de Montr\'eal}}\\\footnotesize{\it{ziyad.zaklani@umontreal.ca}}}
\date{}
\maketitle
\abstract{We compute the asymptotic empirical eigenvalue distribution of the matrix 
\(
M = \bigodot_{i=1}^k \frac{1}{d_i}\X{i}{\X{i}}^\top
\)}
where $X^{(i)}$ are independent matrices with independent rows but general correlation within each row under the dimension scaling $\frac{n}{d_1\dots d_k}\to \gamma$.
\section{Introduction and main results} 

Random matrix theory traces its origins to the seminal paper of Wishart~\cite{wishart1928generalised}, where certain covariance-type matrices were investigated in a statistical context. The field later experienced tremendous growth, propelled in particular by Wigner’s foundational work on spectra of large symmetric matrices~\cite{wigner} and by the contribution of Marchenko and Pastur~\cite{marchenko} in high-dimensional statistics. In the latter, the authors analyzed the empirical spectral distribution of matrices of the form  
\(
\frac{1}{d} XX^\top ,
\)
with $X\in\mathbb R^{n\times d}$ having i.i.d.\ centered entries of variance one (their result in fact allow nontrivial covariance structures among the entries). They established that when $n,d\to\infty$ with $\frac{n}{d}\to\gamma$, the empirical measure  
\(
\frac{1}{n}\sum_{i=1}^n \delta_{\lambda_i}
\)
converges in probability, in the weak sense, to the Marchenko--Pastur law $\mu_{\mathrm{MP}_\gamma}$. For $\gamma\in(0,1]$, this distribution has density  
\begin{equation}\label{eq:mp}
\mu_{\mathrm{MP}_\gamma}(\mathrm dx) = 
\frac{\sqrt{(b-x)(x-a)}}{2\pi \gamma x}\,\mathbf 1_{[a,b]}(x)\,\mathrm dx,
\qquad 
a=(1-\sqrt\gamma)^2,\quad b=(1+\sqrt\gamma)^2 .
\end{equation}
If instead $\gamma>1$, the spectrum of $XX^\top$ includes an additional point mass $(1-1/\gamma)\delta_0$, reflecting the mismatch of dimensions between $XX^\top$ and $X^\top X$. The Marchenko--Pastur theorem has since inspired a vast literature and numerous generalizations under weaker structural assumptions \cites{yin1986limiting, silverstein1995empirical, silverstein1995strong, hachem2007deterministic, baizhou}; see the monograph~\cite{baisilver} for an overview.

In recent years, developments in machine learning have motivated new classes of random matrix models, often exhibiting nonlinear transformations or unconventional scaling regimes; see~\cite{rmmlbook}. A notable direction concerns matrices obtained by applying a nonlinear activation function entrywise to high-dimensional random features, a line of work initiated in the study of kernel methods~\cites{elkaroui, chengsinger} and expanded in the context of large neural networks~\cites{louartliaocouillet, pennington2017nonlinear, benignipeche1, fan2020spectra, piccolo2021analysis, benignipeche2, wangzhu, dabo2024traffic, benignipaquette}. Other structural models have appeared as well. For example, the Neural Tangent Kernel (NTK)~\cites{jacot2018neural, chizat2019lazy}, the covariance matrix of the Jacobian of a network’s outputs during training, can be expressed, in the two-layer case, as  
\begin{equation}\label{eq:NTK}
\mathrm{NTK}
= \frac1d XX^\top \odot \frac1p \sigma'(XW) D^2 \sigma'(XW)^\top 
\;+\; \frac1p \sigma(XW)\sigma(XW)^\top ,
\end{equation}
with $X\in\mathbb R^{n\times d}$ the data matrix, $W\in\mathbb R^{d\times p}$ the random weights at initialization, $D$ the diagonal matrix of output-layer parameters, and $\sigma$ the activation. The spectral behavior of this matrix has been analyzed in~\cites{fan2020spectra, wangzhu} in regimes where the first (Hadamard-product) term is asymptotically trivial, while the regime in which this component persists requires the scaling $n\asymp dp$ and was studied in~\cite{benignipaquette}. Such multivariate polynomial scalings have become increasingly relevant as modern data sets grow extremely large, prompting a wave of recent works~\cites{misiakiewicz2022spectrum, lu2022equivalence, dubova2023universality, hu2024asymptotics, montanarizhong, pandit2024universality, misiakiewicz2023six, kogan2024extremal}.

Despite this surge of activity, comparatively little is known about the spectral properties of Hadamard products of random matrices, even in rather simple settings. The interested reader can see the analysis of entrywise product of structured random matrices \cites{bose2014bulk, mukherjee2023convergence} for instance. In this present article, we consider the following model of matrices. For a family of independent random matrices $\X{i}\in \Rbb^{n\times d_i}$ and a fixed $k\in \N$, we define the matrix 
\[
M = \bigodot_{i=1}^k \frac{1}{d_i}\X{i}{\X{i}}^\top
\quad\text{where}\quad 
(A\odot B)_{ij} = A_{ij}B_{ij}.
\]
We have the following assumption on the entries of $\X{i}$.
\begin{assumption}
    For $i\in[k]$, we suppose that the rows of $\X{i}$ are independent and if $\x^{(i)}_p$ is the $p$-th row of $\X{i}$ we suppose that for all $q\in\unn{1}{d_i}$ we have 
    \(
    \E[x^{(i)}_{pq}]=0
    \) and that there exists a symmetric positive semi-definite matrix $\Sigma^{(i)}\in \Rbb^{d_i\times d_i},$ $C_i>0$ with $\Vert \Sigma^{(i)}\Vert\leqslant C_i,$ such that for any $p\in[n]$
    \[
    \E\left[
        x_{pq}^{(i)}x_{pr}^{(i)}
    \right]
    =
    \sigma_{qr}^{(i)}
    \quad\text{and}\quad 
    \kappa_4^{(i)}\coloneqq\sup_{q,r}\sum_{s,t=1}^{d_i}\left\vert\kappa\left(x_{pq}^{(i)},x_{pr}^{(i)},x_{ps}^{(i)},x_{pt}^{(i)}\right)\right\vert<\infty.
    \]
    We suppose that if $(\lambda^{(i)}_j)_{1\leqslant j\leqslant d_i}$ are the eigenvalues of $\Sigma^{(i)}$ then there exists $\mu_i$ such that 
    \[
    \frac{1}{d_i}\sum_{j=1}^{d_i}\lambda_j^{(i)}\xrightarrow[d_i\to\infty]{}\mu_i\quad\text{weakly}
    \]
    and we denote $C=\max_{1\leqslant i\leqslant k}C_i$ and $\kappa_4 = \max_{1\leqslant i\leqslant k}\kappa_4^{(i)}$.
\end{assumption}
The case $k=2$ with $\Sigma^{(1)}=\mathrm{Id}_{d_1}$ and $\Sigma^{(2)}=\mathrm{Id}_{d_2}$ has been studied in \cite{abou2025eigenvalue} and the asymptotic eigenvalue distribution is simply given by the Marchenko--Pastur distribution with parameter shape $\lim_{n\to\infty} \frac{n}{d_1d_2}.$ We consider a more general Hadamard product with any finite number of matrices with the following assumptions on the scaling of dimensions.
\begin{assumption}
    We suppose that there exists $\gamma>0$ such that 
    \[
    \frac{n}{\prod_{i=1}^k d_i}\xrightarrow[n\to\infty]{}\gamma
    \quad\text{and that}\quad
    d_i\xrightarrow[n\to\infty]{}\infty\text{ for all }i\in [k]. 
    \]
\end{assumption}
We define the following operation on probability measures.
\begin{definition}
    If $\mu$ and $\nu$ are two probability measures, we define $\mu \circledast \nu$ as the classical multiplicative convolution, i.e., the pushforward of $\mu\otimes \nu$ under the multiplication map $m(x, y)=xy.$ 
\end{definition}
\noindent The main reason for this definition is that we have 
\[
\frac{1}{d_id_j}\sum_{p=1}^{d_i}\sum_{q=1}^{d_j}\delta_{\lambda^{(i)}_p\lambda^{(j)}_q}
\xrightarrow[n\to\infty]{}\mu_i \circledast \mu_j.
\]

We also recall the definition of the free multiplicative convolution with the Marchenko--Pastur distribution, also called the Marchenko--Pastur map.
\begin{definition}
    Suppose that $A\succeq 0$ and $B\succeq 0$ where $A\in\Rbb^{a\times a}$, $B\in\Rbb^{b\times b}$ and that $Y\in\Rbb^{a\times b}$ is a random matrix with i.i.d standard entries with all finite moments. If $a\to\infty$ and $b\to\infty$ with $\frac{a}{b}\to\gamma\in(0,\infty)$ and if 
    \[
    \mathrm{limspec}(A)=\mathrm{limspec}(B)=\nu
    \]
    then we have that almost surely
    \[
    \mathrm{limspec}\left(
        \frac{1}{b}A^{\frac{1}{2}}YY^\top A^{\frac{1}{2}}
    \right)
    =
    \mathrm{limspec}\left(
        \frac{1}{a}YBY^\top
    \right)
    =
    \mu_{\mathrm{MP}}^\gamma \boxtimes \nu.
    \]
    It is also characterized by the self-consistent equation followed by its Stieltjes transform 
    \[
    s(z) \coloneqq \int_\Rbb \frac{(\mu_{\mathrm{MP}}^\gamma\boxtimes \nu)(\d x)}{x-z}
    =
    \int_\Rbb \frac{\nu(\d t)}{t(1-\gamma(1+zs(z)))-z}.
    \]
\end{definition}
\begin{theorem}\label{theo:main}
    The empirical eigenvalue distribution of $M$ converges weakly almost surely to $\mu_{\mathrm{MP}}^\gamma \boxtimes \left(\mu_1 \circledast\dots\circledast\mu_k\right)$.
\end{theorem}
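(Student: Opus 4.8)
The plan is to \emph{linearize} the Hadamard product by a tensorization identity, which turns $M$ into an ordinary sample covariance matrix, and then to run a resolvent analysis whose only genuine difficulty is a concentration estimate for the quadratic forms in the dependent, non-Gaussian rows produced by the tensor structure. Writing $\x^{(i)}_p$ for the $p$-th row of $\X{i}$ and using $\scp{u}{v}\scp{u'}{v'}=\scp{u\otimes u'}{v\otimes v'}$, the entries of $M$ are
\[
M_{pq}=\prod_{i=1}^k\frac{1}{d_i}\scp{\x^{(i)}_p}{\x^{(i)}_q}=\frac1D\scp{\y_p}{\y_q},\qquad \y_p:=\x^{(1)}_p\otimes\cdots\otimes\x^{(k)}_p\in\Rbb^{D},\quad D:=\prod_{i=1}^k d_i,
\]
so $M=\frac1D YY^\top$ where $Y\in\Rbb^{n\times D}$ has the \emph{independent} rows $\y_p^\top$. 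Because the $\X{i}$ are independent, the covariance of each tensor row factorizes as $\E[\y_p\y_p^\top]=\Sigma:=\Sigma^{(1)}\otimes\cdots\otimes\Sigma^{(k)}$, whose eigenvalues are the products $\lambda^{(1)}_{j_1}\cdots\lambda^{(k)}_{j_k}$; by the assumed convergence of each $\frac1{d_i}\sum_j\lambda^{(i)}_j$ the empirical spectrum of $\Sigma$ converges weakly to $\nu:=\mu_1\circledast\cdots\circledast\mu_k$. Since $\frac nD\to\gamma$, the target law is precisely the Marchenko--Pastur map $\mu_{\mathrm{MP}}^\gamma\boxtimes\nu$ of this limiting population spectrum.

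\textbf{Reduction and resolvent scheme.} It thus suffices to prove the sample-covariance limit for $\frac1D YY^\top$ with population covariance $\Sigma$. What prevents us from directly quoting a classical Marchenko--Pastur theorem for general population covariance is that the rows $\y_p$ are not of the form $\Sigma^{1/2}\times(\text{independent-entry vector})$: the tensor coordinates $y_{p,(a_1,\dots,a_k)}=\prod_i x^{(i)}_{pa_i}$ are strongly dependent within a row, so only a \emph{universality} statement can apply. I would work with the Stieltjes transform $s_n(z)=\frac1n\Tr(M-z)^{-1}$ for $\Im z>0$. Writing $s_n-\E s_n$ as a sum of martingale differences over the independent rows of $Y$ and applying a Burkholder-type inequality (after the standard entrywise truncation justified by the finite fourth moments) gives $s_n(z)-\E s_n(z)\to0$ almost surely. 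Then, with $G=(M-z)^{-1}$ and subscript $(p)$ denoting deletion of the $p$-th row, the Schur complement yields
\[
G_{pp}=\frac{1}{\frac1D\norm{\y_p}^2-z-\frac1D\,\y_p^\top Q^{(p)}\y_p},\qquad Q^{(p)}:=\frac1D Y_{(p)}^\top(M_{(p)}-z)^{-1}Y_{(p)}.
\]

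\textbf{Self-consistent equation.} The two quantities $\frac1D\norm{\y_p}^2$ and $\frac1D\y_p^\top Q^{(p)}\y_p$ are quadratic forms in $\y_p$ evaluated against matrices independent of $\y_p$; replacing them by their conditional means $\frac1D\Tr\Sigma$ and $\frac1D\Tr(\Sigma Q^{(p)})$ and substituting into $s_n=\frac1n\sum_p G_{pp}$ leads, after the standard bookkeeping relating $\frac1D\Tr(\Sigma Q^{(p)})$ to $s_n(z)$ itself, to the fixed-point equation
\[
s(z)=\int_\Rbb\frac{\nu(\d t)}{t\bigl(1-\gamma(1+zs(z))\bigr)-z}.
\]
By the characterization recalled before the statement this equation has a unique solution analytic in the upper half-plane, namely the Stieltjes transform of $\mu_{\mathrm{MP}}^\gamma\boxtimes\nu$. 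The concentration step gives $s_n\approx\E s_n$, stability of the fixed-point equation turns the approximate equation satisfied by $\E s_n$ into genuine convergence $\E s_n\to s$, and weak almost-sure convergence of the empirical spectral distribution then follows by the Stieltjes continuity theorem.

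\textbf{Main obstacle.} The crux is justifying the two replacements above, i.e. the concentration
\[
\frac1D\,\y_p^\top A\y_p=\frac1D\Tr(\Sigma A)+o(1)
\]
for a matrix $A$ of bounded norm independent of $\y_p$, when $\y_p$ has dependent, non-Gaussian coordinates. Expanding the variance of this form produces the fourth moments $\E[y_{p,\mathbf a}y_{p,\mathbf b}y_{p,\mathbf c}y_{p,\mathbf d}]=\prod_i\E[x^{(i)}_{pa_i}x^{(i)}_{pb_i}x^{(i)}_{pc_i}x^{(i)}_{pd_i}]$, which, through the moment--cumulant relations applied factorwise, split into a fully paired (Gaussian) part and genuinely connected parts indexed by the nonempty subsets $S\subseteq[k]$ of coordinates carrying a fourth cumulant. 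The paired part reproduces $\frac1D\Tr(\Sigma A)$ with fluctuation $O(D^{-1}\norm{A}^2)$, since $\Tr\Sigma^2=O(D)$ and $\norm{\Sigma}\le C^k$; each connected part is dominated, using $\norm{A}\le$ const and $\norm{\Sigma^{(i)}}\le C_i$, by a product in which every $i\in S$ contributes a factor controlled by $\kappa_4^{(i)}$ precisely through the double-sum normalization built into the hypothesis, so that the whole contribution is $o(1)$. Tracking these $2^k-1$ connected terms against the $D^{-2}$ normalization, uniformly on compact subsets of the upper half-plane, is where the assumption $\kappa_4<\infty$ is essential and constitutes the main technical work.
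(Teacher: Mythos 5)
Your overall architecture coincides with the paper's: you tensorize the rows so that $M=\frac1D YY^\top$ with independent rows of covariance $\Sigma^{(1)}\otimes\cdots\otimes\Sigma^{(k)}$, identify $\mathrm{limspec}(\Sigma)=\mu_1\circledast\cdots\circledast\mu_k$, and reduce everything to concentration of the quadratic forms $\frac1D\,\y_p^\top A\y_p$; the only structural difference is that the paper then invokes the Bai--Zhou theorem as a black box while you re-derive it by the standard resolvent/martingale scheme, which is a legitimate (if longer) substitute. The genuine gap is inside the step you correctly identify as the crux. Expanding $\E[y_{\mathbf a}y_{\mathbf b}y_{\mathbf c}y_{\mathbf d}]=\prod_{i=1}^k\bigl(\I^{(i)}+\II^{(i)}+\III^{(i)}+\IV^{(i)}\bigr)$ produces $4^k$ monomials, and your taxonomy --- a ``fully paired (Gaussian) part'' with fluctuation $O(D^{-1}\norm{A}^2)$ plus $2^k-1$ connected classes indexed by the nonempty sets $S\subseteq[k]$ carrying a fourth cumulant --- omits the \emph{cross-pairing} monomials, in which different tensor slots use different pair partitions and no slot carries a cumulant, e.g.\ $\I^{(1)}\,\II^{(2)}$ for $k=2$. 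Only the single monomial $\prod_i\I^{(i)}$ cancels against $(\E\,\y^\top A\y)^2$; a term like $\I^{(1)}\,\II^{(2)}$ survives, is \emph{not} one of the three Wick terms of $\Sigma$ (so it does not assemble into a trace $\Tr(\Sigma A\Sigma A^\top)$ bounded by $O(D)$), and is not controlled by any $\kappa_4^{(i)}$. It is in fact the dominant contribution: already for $\Sigma^{(i)}=\mathrm{Id}$ and $A=\mathrm{Id}$ it equals $d_1^2d_2/n^2\asymp 1/d_2$, i.e.\ order $1/d_{\min}$, far larger than your claimed $O(D^{-1})$ --- which is precisely why the paper's final rate in Proposition \ref{prop:concquad} is $O(1/d_{\min})$ and not $O(1/D)$.

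Handling these terms requires an idea absent from your sketch. A slotwise Schur test is lossy on an $\I$-bearing slot, since $\sum_{u_1',v_1'}\lvert\sigma^{(1)}_{u_1'v_1'}\rvert$ can be of order $d_1^{3/2}$ even with $\Vert\Sigma^{(1)}\Vert\le C$, and the resulting bound (e.g.\ $d_1^{1/2}/d_2$ for $\I^{(1)}\,\IV^{(2)}$) diverges when $d_1\gg d_2^2$, a regime permitted by the assumptions, which impose no comparability among the $d_i$. The paper's resolution is Lemma \ref{lem:I}: when slot $j$ carries $\I^{(j)}$, contract $\B$ against $\Sigma^{(j)}$ in that slot and prove the \emph{operator-norm} bound $\norm{\M}\le\norm{\B}\,\Vert\Sigma^{(j)}\Vert$ for the normalized partial contraction $m_{\u\v}=\frac{1}{d_j}\sum_{u_j,v_j}\sigma^{(j)}_{u_jv_j}b_{u_j\u v_j\v}$; this reduces every $\I$-bearing family to the $(k-1)$-slot problem with $n_j=n/d_j$ (note $n_j/\prod_{i\ne j}d_i\to\gamma$), and an induction on $k$ with Lemma \ref{lem:k=1} as base closes the argument, with the centering cancellation recurring at each level. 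Your remaining estimates do match the paper's: pure $\II/\III$ mixtures via Cauchy--Schwarz and Frobenius norms (Lemma \ref{lem:II-III}, giving $O(\vert\db\vert^{-1/2})$, not $O(D^{-1})$, because mixed $\II/\III$ patterns act as partial transposes that preserve only the Frobenius norm), and cumulant-bearing terms via Schur tests exploiting the double-sum normalization of $\kappa_4$ (Lemma \ref{lem:IV}). But without the inductive partial-contraction step the concentration estimate, and hence the theorem, is not established.
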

We show some examples in Figure \ref{fig:simulation}.

\begin{figure}[ht!]
    \centering
    
    % Row 1
    \begin{subfigure}{0.45\textwidth}
        \centering\captionsetup{width=1.2\linewidth}
        \includegraphics[width=.8\linewidth]{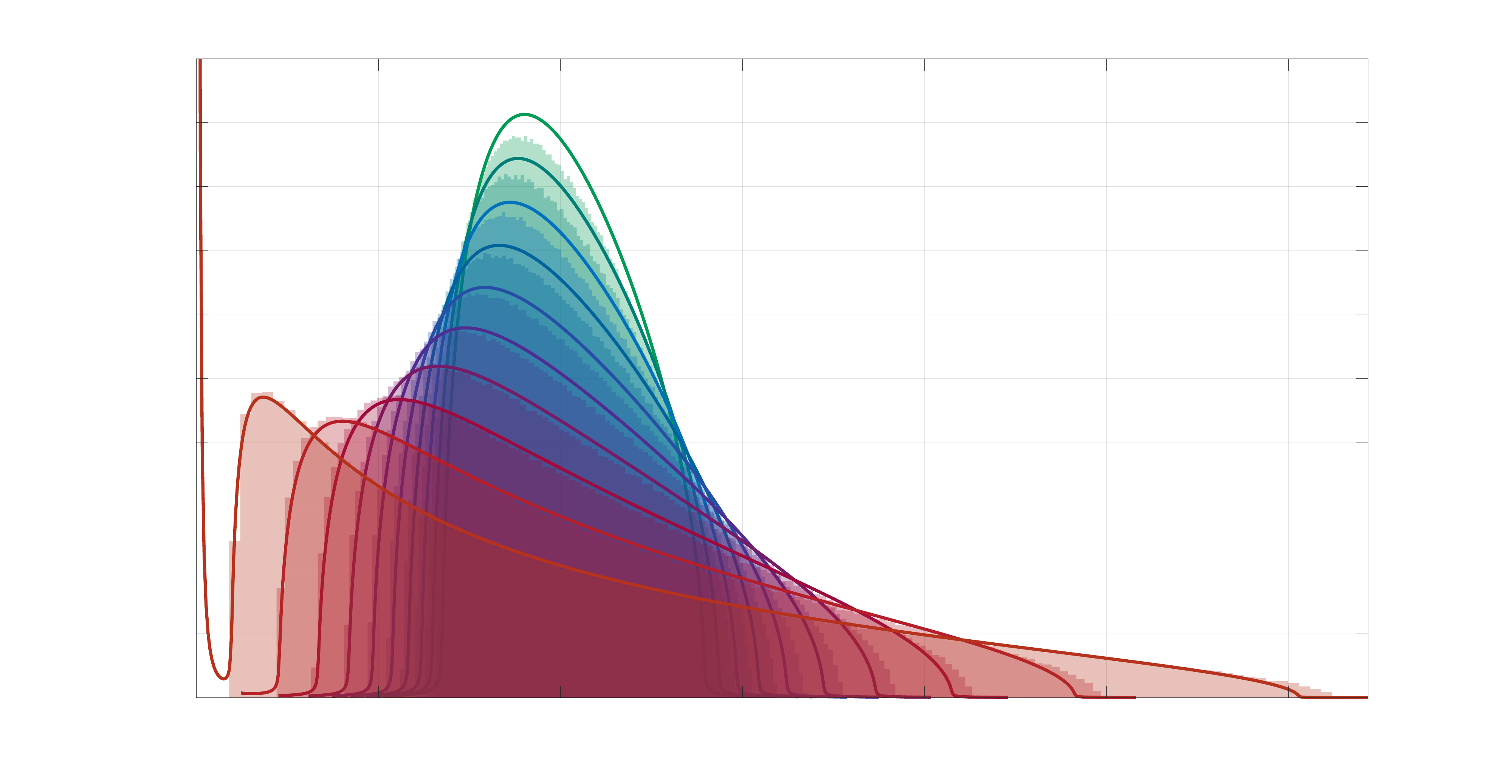}
        \caption{$\mu_1 = \delta_1$, $\mu_2 = \frac{1}{3}\left(\delta_1+\delta_2+\delta_3 \right)$, $\gamma\in[1.5,6].$}
        \label{fig:1}
    \end{subfigure}
    \begin{subfigure}{0.45\textwidth}
        \centering\captionsetup{width=1.2\linewidth}
        \includegraphics[width=.8\linewidth]{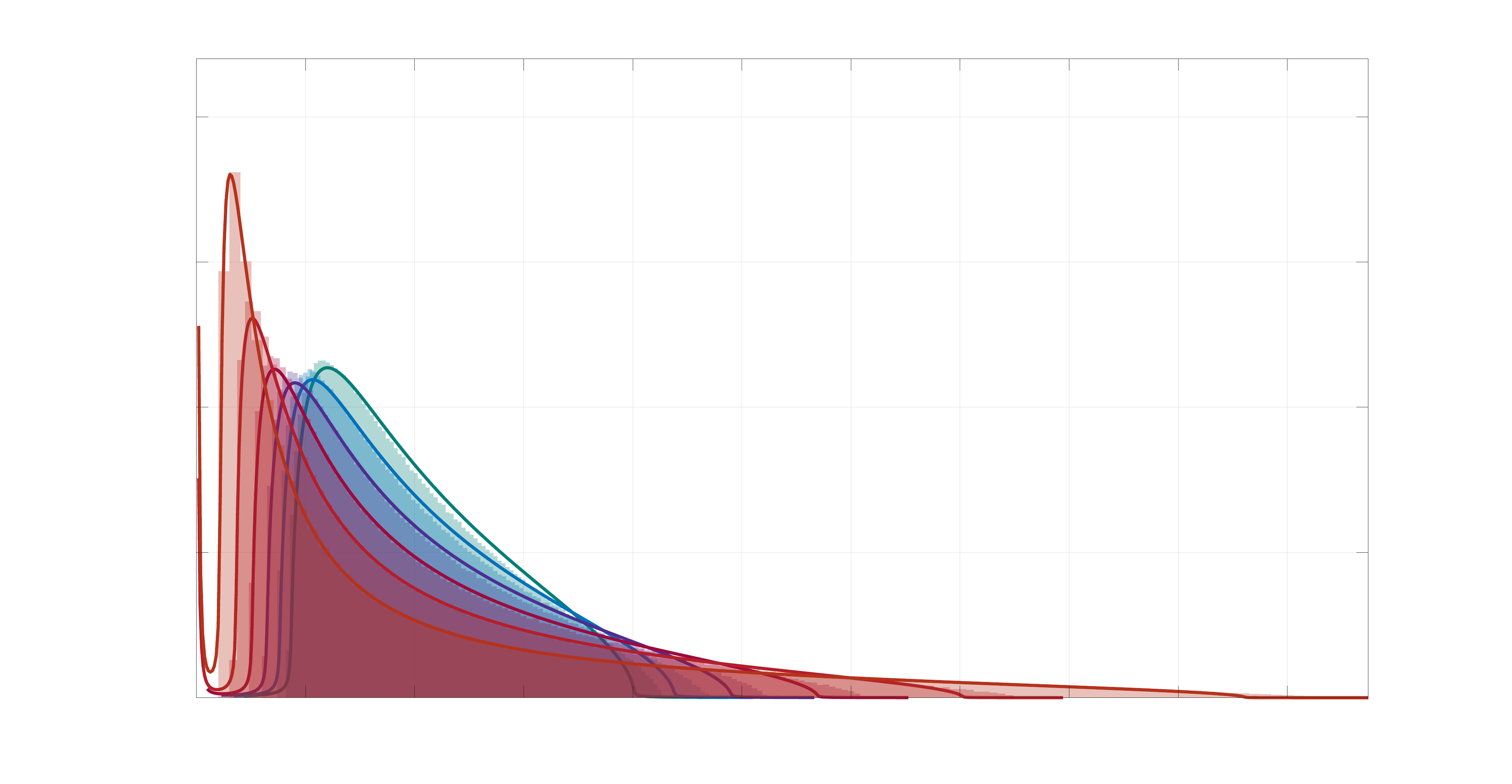}
        \caption{$\mu_1 = \delta_1$, $\Sigma^{(2)}_{ij} = \rho^{\vert i-j\vert}$ with $\rho=.9$, $\gamma\in[3,8].$}
        \label{fig:2}
    \end{subfigure}

    % Row 2
    \begin{subfigure}{0.45\textwidth}
        \centering\captionsetup{width=1.2\linewidth}
        \includegraphics[width=.8\linewidth]{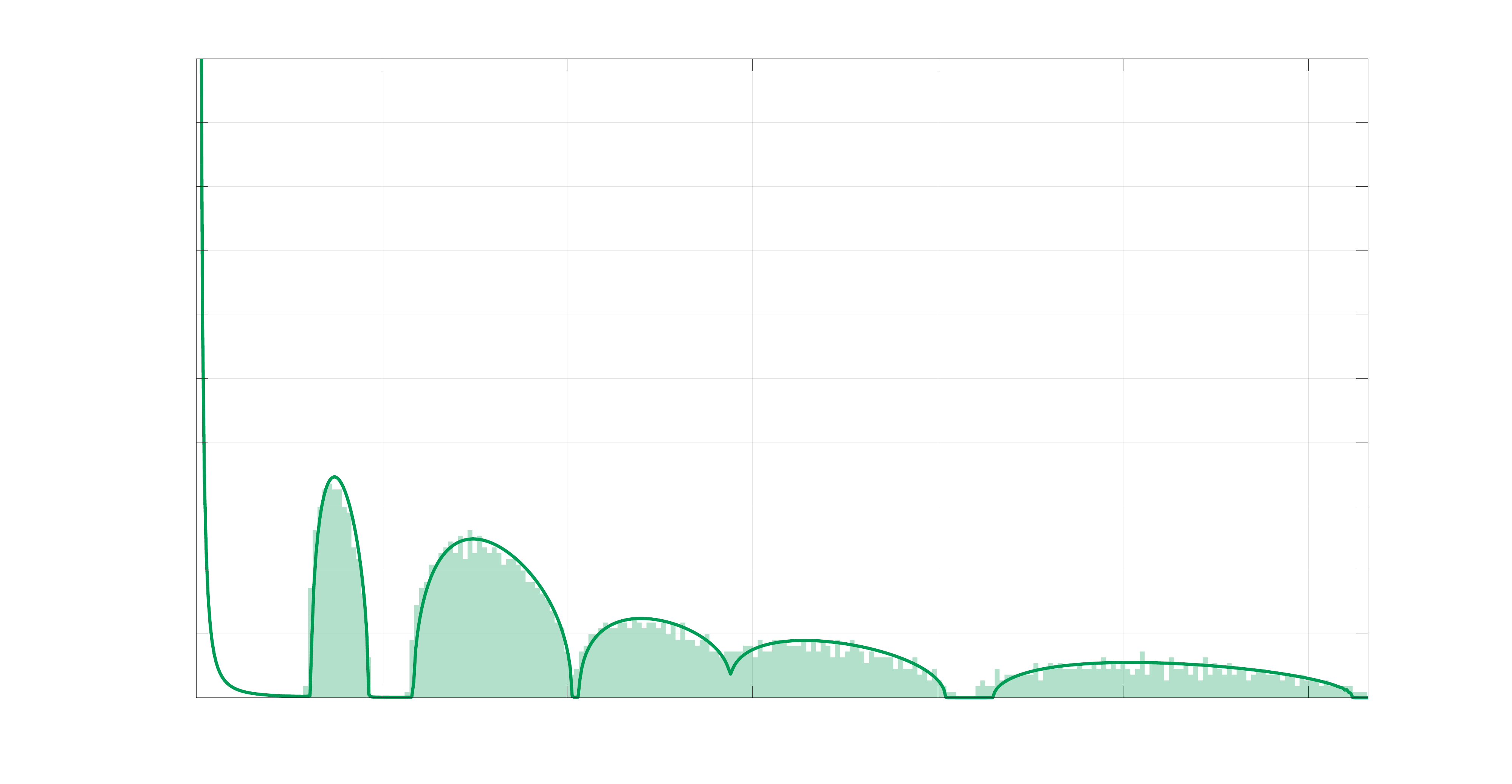}
        \caption{$\mu_1 = \frac{1}{2}\left(\delta_{1}+\delta_2\right),$ $\mu_2 = \frac{1}{3}\left(\delta_1+\delta_2+\delta_3\right),$ $\gamma=.25$.}
        \label{fig:3}
    \end{subfigure}
    \begin{subfigure}{0.45\textwidth}
        \centering\captionsetup{width=1.2\linewidth}
        \includegraphics[width=.8\linewidth]{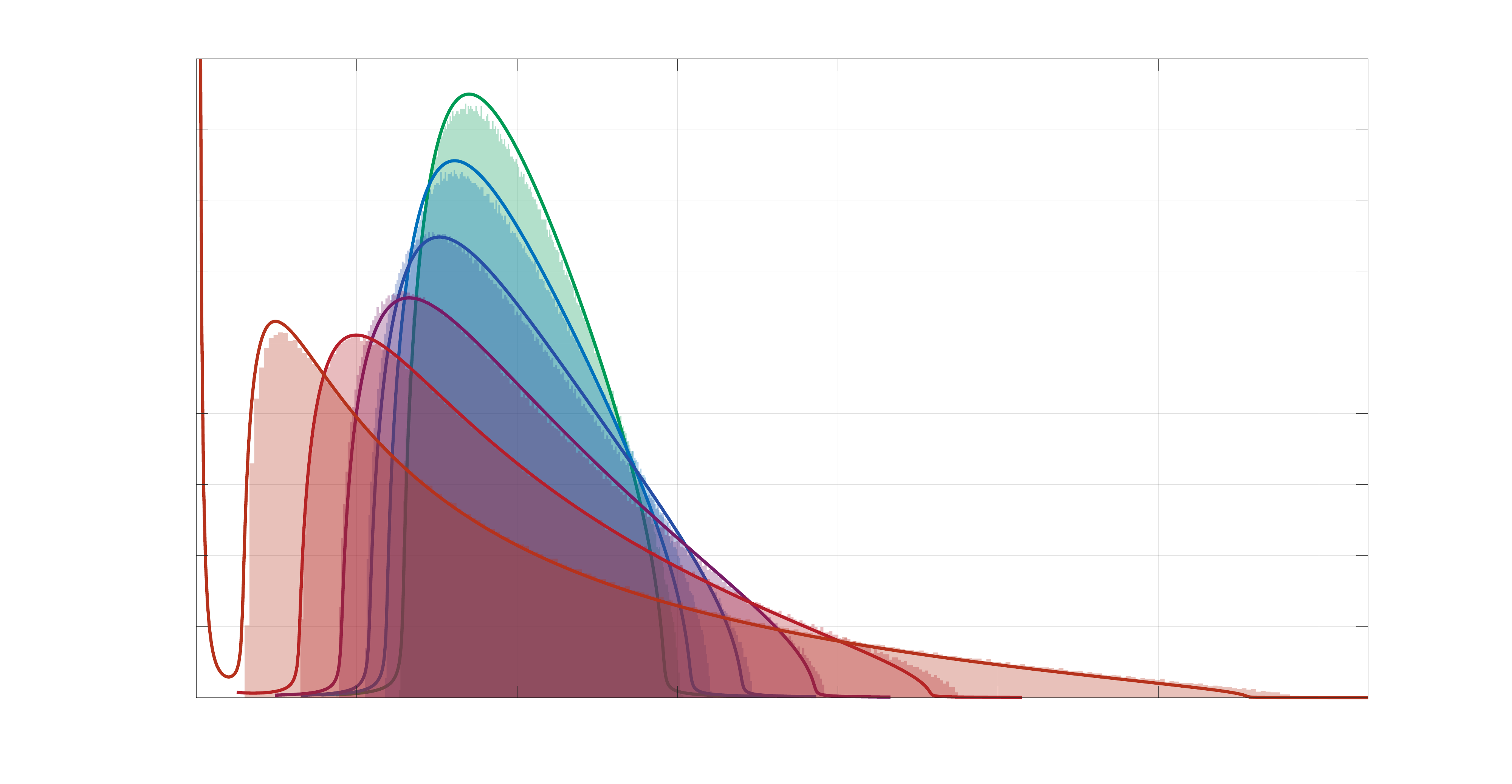}
        \caption{$\mu_1 = \mu_{\mathrm{MP}}^2,$ $\mu_2 = \mu_{\mathrm{MP}}^3,$ $\gamma\in[2,7]$}
        \label{fig:4}
    \end{subfigure}

    \caption{Histogram of eigenvalues of $M = \frac{1}{d_1}X^{(1)}{X^{(1)}}^\top \odot \frac{1}{d_2}X^{(2)}{X^{(2)}}^\top$ for specific covariances giving different $\mu_1$ and $\mu_2$. The curves are the theoretical prediction $\mu_{\mathrm{MP}}^{\gamma}\boxtimes (\mu_1\circledast \mu_2)$ for $n=35000,$ $d_1=d_2$ and different values of $\gamma$ specified for each figure.}
    \label{fig:simulation}
\end{figure}
\paragraph{Notation}
We denote $\db = (d_1,\dots,d_k)$ and $\vert \db\vert = \prod_{i=1}^k d_i$. We denote $[n]=\{1,\dots,n\}$ and $[\db] = \prod_{i=1}^k[d_i].$ We also denote $\R^\db = \bigotimes_{i=1}^k \Rbb^{d_i}$. We denote entries by lowercase letters $x_{ij}\in \Rbb$, vectors by lowercase bold letters $\x_i\in \Rbb^{d_i}$, matrices by uppercase letters $X\in \Rbb^{n\times d_i}$, and tensors by calligraphic letters tensors $\mathcal{X}\in \R^\db$. We also denote for a matrix (or tensor) $M:\Rbb^n\to\Rbb^n$,
\[
\mathrm{limspec}(M) = \lim_{n\to\infty}\frac{1}{n}\sum_{i=1}^n \delta_{\lambda_i}
\]
where $(\lambda_i)_i$ are the eigenvalues of $M$ and the limit is a weak convergence.
\section{Proof of the main result}
We start by noticing that if we consider an entry of $M$,
\[
m_{pq} = \frac{1}{\vert \db\vert}\prod_{i=1}^k \langle \x_p^{(i)},\x_q^{(i)}\rangle_{\Rbb^{d_i}}
=
\frac{1}{\vert \db\vert}\langle \W^p, \W^q\rangle_{\R^\db}
\quad\text{where we defined}\quad
\W^p = \bigotimes_{i=1}^k \x_p^{(i)}\in\R^\db.  
\]
Thus, for $(\mathbf{e}_p)_{p=1}^n$ the standard basis of $\Rbb^n$, we have 
\[
M = \frac{1}{\vert \db\vert} \A^\top \A \quad\text{with}\quad
\A = \sum_{p=1}^n \W^p\otimes \mathbf{e}_p \in \R^\db\otimes \Rbb^n.
\]
We note that the ``columns'' of $\A$, $\A_{\cdot p}$ are independent, and thus we are in the setting to use the theorem by Bai--Zhou \cite{baizhou}
\begin{theorem}[\cite{baizhou}]\label{theo:baizhou}
    For all $p\in[n]$, if we have $\E\left[\A_{\u p}\A_{\v p}\right]=\T_{\u\v}$ and for any non-random tensor $\B\in \R^\db\otimes \R^{\db}$ with bounded norm,
    \begin{equation}\label{eq:concquad}
    \frac{1}{n^2}\E\left[
        \left(
            \A_{\cdot p}^\top\B\A_{\cdot p}-\Tr(\B \T)
        \right)^2
    \right]
    \xrightarrow[n\to\infty]{}0.
    \end{equation}
    If additionally the norm of the tensor $\mathcal{T}$ is uniformly bounded and the empirical eigenvalue distribution of $\T$ converges to a non-random probability distribution $\nu$ then 
    \[
    \mathrm{limspec}\left(\frac{1}{\vert \db\vert}\A^\top\A\right) = \mu_{\mathrm{MP}}^\gamma\boxtimes \nu.
    \]
\end{theorem}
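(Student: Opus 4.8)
The plan is to prove this by the resolvent/Stieltjes-transform method: derive a self-consistent equation and then show it has a unique solution, namely the transform of $\mu_{\mathrm{MP}}^\gamma\boxtimes\nu$. Writing $B = \frac{1}{|\db|}\A\A^\top = \frac{1}{|\db|}\sum_{p=1}^n \A_{\cdot p}\A_{\cdot p}^\top$, the object of interest becomes a sum of $n$ independent rank-one tensors on $\R^\db$ with common second moment $\E[\A_{\cdot p}\A_{\cdot p}^\top]=\T$. Since $\frac{1}{|\db|}\A^\top\A$ and $B$ share all nonzero eigenvalues, their Stieltjes transforms $m_n(z)=\frac{1}{n}\Tr(\frac{1}{|\db|}\A^\top\A - z)^{-1}$ and $\underline m_n(z)=\frac{1}{|\db|}\Tr(B-z)^{-1}$ are tied by the exact companion identity $\underline m_n = \frac{n}{|\db|}m_n + \frac{n-|\db|}{|\db|z}$, so it suffices to analyze the resolvent $R(z)=(B-z)^{-1}$ for $z\in\C^+$, where $\norm{R(z)}\le 1/\Im z$.

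First I would establish that the random transform concentrates: $\frac{1}{|\db|}\Tr R(z) - \E[\frac{1}{|\db|}\Tr R(z)]\to 0$ almost surely. This follows from a Doob martingale decomposition along the $n$ independent columns $\A_{\cdot 1},\dots,\A_{\cdot n}$, since removing one column is a rank-one perturbation of $B$ and hence changes $\Tr R(z)$ by at most $1/\Im z$; Burkholder's inequality then yields summable fluctuations. This reduces the problem to identifying the limit of the deterministic quantity $\E[\underline m_n(z)]$.

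Next I would derive the self-consistent equation via a leave-one-out expansion. Setting $B_p = B - \frac{1}{|\db|}\A_{\cdot p}\A_{\cdot p}^\top$ and $R_p=(B_p-z)^{-1}$, the Sherman--Morrison identity gives $R = R_p - \beta_p\frac{1}{|\db|}R_p\A_{\cdot p}\A_{\cdot p}^\top R_p$ with $\beta_p=(1+\frac{1}{|\db|}\A_{\cdot p}^\top R_p\A_{\cdot p})^{-1}$. Here the hypothesis \eqref{eq:concquad} enters crucially: because $R_p$ and its products with $\T$ are independent of $\A_{\cdot p}$ and have norm bounded on $\C^+$, the quadratic-form concentration lets me replace $\frac{1}{|\db|}\A_{\cdot p}^\top R_p\A_{\cdot p}$ by $\frac{1}{|\db|}\Tr(R_p\T)$, and likewise $\A_{\cdot p}^\top R_p\T R_p\A_{\cdot p}$ by its trace, up to $L^2$ errors that are summably small. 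Feeding these into the resolvent identity $zR = BR - I$, taking the trace and then the expectation, and using the concentration step together with $\frac{n}{|\db|}\to\gamma$ and the convergence of the spectral distribution of $\T$ to $\nu$, produces in the limit (after converting via the companion identity) the fixed-point equation for $s(z)=\lim m_n(z)$,
\[
s(z) = \int_\Rbb \frac{\nu(\d t)}{t(1 - \gamma(1 + z s(z))) - z},
\]
which is precisely the equation characterizing $\mu_{\mathrm{MP}}^\gamma\boxtimes\nu$.

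Finally I would close the argument by uniqueness: the self-consistent equation admits a unique solution mapping $\C^+\to\C^+$ with the normalization $s(z)\sim -1/z$ as $z\to\infty$, by the standard stability analysis in which the associated map is a contraction on a suitable subdomain. Combined with the almost-sure concentration, this gives pointwise convergence $m_n(z)\to s(z)$ on $\C^+$, and the continuity theorem for Stieltjes transforms upgrades it to weak almost-sure convergence of the empirical spectral distribution to $\mu_{\mathrm{MP}}^\gamma\boxtimes\nu$. The main obstacle is the self-consistent step: because no multiplicative structure $\A_{\cdot p}=\T^{1/2}\y_p$ is assumed, every quadratic form must be controlled purely through the covariance $\T$ and the variance bound \eqref{eq:concquad}, and one must verify that the $n$ accumulated approximation errors—each carrying the random weights $R_p$ and $R_p\T R_p$—genuinely vanish after summation, which is where both the uniform norm bound on $\T$ and the decay supplied by \eqref{eq:concquad} are needed.
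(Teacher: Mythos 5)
Your proposal is correct, but note that the paper itself gives no proof of this statement: it is imported verbatim from Bai--Zhou \cite{baizhou}, whose hypotheses the paper then verifies for its specific model. Your outline --- companion identity between $m_n$ and $\underline m_n$, almost-sure concentration of the Stieltjes transform via a martingale/rank-one-perturbation argument with Burkholder, Sherman--Morrison leave-one-out expansion in which \eqref{eq:concquad} (applied conditionally, with $R_p$ independent of $\A_{\cdot p}$ and $\Vert R_p\Vert\leqslant 1/\Im z$) replaces quadratic forms by traces, and closure by uniqueness of the solution of the self-consistent equation plus the Stieltjes continuity theorem --- is essentially the standard Silverstein/Bai--Zhou proof of the cited theorem, so it matches the source's approach rather than offering a genuinely different route.
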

We thus see by this theorem that we need to first compute the covariance tensor $\T$, to prove that it is uniformly bounded and that its empirical eigenvalue distribution converges to some measure $\nu$. We then have to consider the concentration of quadratic form \eqref{eq:concquad}.
\paragraph{Computation of $\T$.} We start by simply computing the tensor $\T\in \R^\db\otimes \R^{\db}$.
\begin{lemma}\label{lem:covtensor}
    We have 
    \[
    \T
    =
    \pi\left(
        \bigotimes_{i=1}^k \Sigma^{(i)}
    \right)
    \]
    where $\pi$ is the following interlacing braid operator such that for $\u,\v\in [\db]$,
    \[
    \pi(\B)_{\u\v} = \pi(\B)_{u_1,\dots,u_k,v_1,\dots,v_k}
    =
    \B_{u_1,v_1,\dots,u_k,v_k}.
    \]
\end{lemma}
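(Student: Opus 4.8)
The plan is to compute the covariance tensor entrywise directly from the definition of the columns of $\A$ and then to recognize the resulting product as a reindexing of a Kronecker product. Recall that the $p$-th column of $\A$ is $\A_{\cdot p}=\W^p=\bigotimes_{i=1}^k \x_p^{(i)}$, so that for a multi-index $\u=(u_1,\dots,u_k)\in[\db]$ its $\u$-component is the monomial $\A_{\u p}=\prod_{i=1}^k x^{(i)}_{p u_i}$. Substituting this into $\T_{\u\v}=\E[\A_{\u p}\A_{\v p}]$ gives $\T_{\u\v}=\E\big[\prod_{i=1}^k x^{(i)}_{p u_i}x^{(i)}_{p v_i}\big]$.

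The key step is to factorize this expectation. Since the matrices $\X{1},\dots,\X{k}$ are mutually independent, the rows $\x_p^{(1)},\dots,\x_p^{(k)}$ (for the fixed index $p$) are independent as well, so the product over $i$ splits into a product of expectations and, by the second-moment hypothesis of the Assumption,
\[
\T_{\u\v}=\prod_{i=1}^k \E\big[x^{(i)}_{p u_i}x^{(i)}_{p v_i}\big]=\prod_{i=1}^k \sigma^{(i)}_{u_i v_i}.
\]
In particular the right-hand side is independent of $p$, which is exactly what the Bai--Zhou framework requires.

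It remains to match $\prod_{i=1}^k \sigma^{(i)}_{u_i v_i}$ with the claimed braided Kronecker product, and this is the only place where care is needed. By definition of the tensor product, $\big(\bigotimes_{i=1}^k \Sigma^{(i)}\big)_{u_1,v_1,\dots,u_k,v_k}=\prod_{i=1}^k \sigma^{(i)}_{u_i v_i}$, since each factor $\Sigma^{(i)}$ contributes its own pair $(u_i,v_i)$ and the Kronecker convention interleaves these pairs. The tensor $\T$, however, is indexed in block form with all left indices $\u$ grouped before all right indices $\v$. The interlacing operator $\pi$ is precisely the permutation that rearranges the interleaved string $(u_1,v_1,\dots,u_k,v_k)$ into the block string $(u_1,\dots,u_k,v_1,\dots,v_k)=(\u,\v)$, so that $\pi\big(\bigotimes_{i=1}^k \Sigma^{(i)}\big)_{\u\v}=\prod_{i=1}^k \sigma^{(i)}_{u_i v_i}=\T_{\u\v}$, which is the asserted identity. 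The whole argument is elementary; the genuine content is the bookkeeping intertwining the Kronecker index ordering with the block ordering seen by the quadratic form $\A_{\cdot p}^\top\B\A_{\cdot p}$, while the probabilistic input reduces to the factorization of a product of independent coordinates.
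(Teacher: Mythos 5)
Your proof is correct and follows essentially the same route as the paper: compute $\T_{\u\v}=\E[\A_{\u p}\A_{\v p}]$ entrywise, factor the expectation across $i$ using independence of the $\X{i}$ together with the second-moment assumption to get $\prod_{i=1}^k\sigma^{(i)}_{u_iv_i}$, and identify this with $\pi\bigl(\bigotimes_{i=1}^k\Sigma^{(i)}\bigr)_{\u\v}$ by definition of the interlacing operator. The only difference is that you spell out the independence factorization and the index bookkeeping more explicitly, which the paper leaves implicit.
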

\begin{proof}
    We compute for $p\in[n]$,
    \[
    \E\left[
        \A_{\u p}\A_{\v p}
    \right]
    =
    \E\left[
        \prod_{i=1}^k 
        x^{(i)}_{pu_i}x^{(i)}_{pv_i}
    \right]
    =
    \prod_{i=1}^k
    \sigma^{(i)}_{u_iv_i}
    =
    \pi\left(
        \bigotimes_{i=1}^k \Sigma^{(i)}
    \right)_{\u\v}
    \]
    by definition of the interlacing braid operator.
\end{proof}
We can compute the limiting eigenvalue distribution of $\T$.
\begin{lemma}\label{lem:covtensor2}
    We have that $\T$ is uniformly bounded and that
    \[
    \nu \coloneqq \mathrm{limspec}\left(\T\right)
    =
    \mu_1\circledast\dots\circledast \mu_k.
    \]
\end{lemma}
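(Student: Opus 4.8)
The plan is to read the spectrum of $\T$ directly off its tensor-product structure. By Lemma~\ref{lem:covtensor} we have $\T = \pi\bigl(\bigotimes_{i=1}^k \Sigma^{(i)}\bigr)$, and the only role of the braid operator $\pi$ is to regroup the $2k$ tensor legs so that the $k$ row indices $\u$ and the $k$ column indices $\v$ are collected separately. Thus, as a self-adjoint operator on $\R^\db = \bigotimes_{i=1}^k \Rbb^{d_i}$, the tensor $\T$ has entries $\T_{\u\v} = \prod_{i=1}^k \sigma^{(i)}_{u_i v_i}$, which is exactly the Kronecker product $\bigotimes_{i=1}^k \Sigma^{(i)}$. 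I would then invoke the eigenvalue calculus for Kronecker products: if $w^{(i)}_j$ is an orthonormal eigenbasis of $\Sigma^{(i)}$ with $\Sigma^{(i)} w^{(i)}_j = \lambda^{(i)}_j w^{(i)}_j$, a one-line computation shows that $\bigotimes_{i=1}^k w^{(i)}_{j_i}$ is an eigenvector of $\T$ with eigenvalue $\prod_{i=1}^k \lambda^{(i)}_{j_i}$. As $(j_1,\dots,j_k)$ ranges over $[\db]$ these vectors form an orthonormal basis of $\R^\db$, so the full spectrum of $\T$ (with multiplicity) is the multiset $\bigl\{\prod_{i=1}^k \lambda^{(i)}_{j_i}\bigr\}$.

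Uniform boundedness is then immediate. Since $\Sigma^{(i)} \succeq 0$ and $\norm{\Sigma^{(i)}} \le C_i \le C$, every eigenvalue of $\T$ satisfies $0 \le \prod_{i=1}^k \lambda^{(i)}_{j_i} \le \prod_{i=1}^k \norm{\Sigma^{(i)}} \le C^k$, whence $\norm{\T} \le C^k$ uniformly in $n$. In particular all the relevant empirical spectral measures are supported in the fixed compact interval $[0, C^k]$, and each factor's eigenvalue measure in $[0,C]$.

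For the limiting distribution, I would express the empirical eigenvalue measure of $\T$ as a pushforward of a product of the marginal empirical eigenvalue measures:
\[
\frac{1}{\vert \db\vert}\sum_{(j_1,\dots,j_k)\in[\db]} \delta_{\prod_{i=1}^k \lambda^{(i)}_{j_i}}
= m_*\left(\bigotimes_{i=1}^k \frac{1}{d_i}\sum_{j=1}^{d_i}\delta_{\lambda^{(i)}_j}\right),
\qquad m(x_1,\dots,x_k) = x_1\cdots x_k.
\]
This identity is nothing more than the factorization of the sum over the product index set $[\db]$. By Assumption~1 each marginal $\frac{1}{d_i}\sum_{j}\delta_{\lambda^{(i)}_j}$ converges weakly to $\mu_i$, and all of them are supported in the common compact set $[0,C]$. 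The last step is to pass to the limit: weak convergence of each factor on a common compact set implies weak convergence of the product measures to $\bigotimes_{i=1}^k \mu_i$, and pushing forward under the continuous multiplication map $m$ then yields weak convergence to $m_*\bigl(\bigotimes_{i=1}^k \mu_i\bigr) = \mu_1 \circledast \dots \circledast \mu_k$ by definition of $\circledast$, so that $\nu = \mathrm{limspec}(\T) = \mu_1 \circledast \dots \circledast \mu_k$.

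I expect the only delicate point to be this last continuity argument, namely that weak convergence of the marginals upgrades to weak convergence of the product measures and that the pushforward is stable under it. This is exactly where the uniform compact support coming from the bound $\norm{\Sigma^{(i)}} \le C$ is essential: it provides tightness and lets one reduce to test functions of product form $g_1(x_1)\cdots g_k(x_k)$ via Stone--Weierstrass, for which the integrals factorize and the convergence is transparent.
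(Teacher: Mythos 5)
Your proof is correct and follows essentially the same route as the paper: identify the eigenvectors of $\T$ as the tensor products $\bigotimes_{i=1}^k \w^{(i)}_{p_i}$ with eigenvalues $\prod_{i=1}^k \lambda^{(i)}_{p_i}$, and then pass to the limit of the empirical measure of these products, which is by construction the pushforward of the product of the marginal spectral measures under multiplication. The only differences are minor refinements on your side: you spell out the weak-convergence step (convergence of product measures plus continuity of the multiplication map, via compact support and Stone--Weierstrass) that the paper simply asserts, and your operator-norm bound $\Vert \T\Vert \le C^k$ is the sharp statement of the uniform boundedness that the paper records as $\Vert\T\Vert\le C$.
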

\begin{proof}
     For $i\in[k]$, we denote $(\w^{(i)}_p)_{1\leqslant p\leqslant d_i}$ the eigenvectors of $\Sigma^{(i)}$ such that $\Sigma^{(i)}\w^{(i)}_p = \lambda_p^{(i)}\w^{(i)}_p$. 
    Then we see that for $p_i\in[d_i]$ and $\u\in[\db]$,
    \[
    \T\left(\bigotimes_{i=1}^k \w^{(i)}_{p_i}\right)_\u
    =
    \sum_{\v\in[\db]}
    \pi\left(\bigotimes_{i=1}^k \Sigma^{(i)}\right)_{\u\v} 
    \left(\bigotimes_{i=1}^k \w^{(i)}_{p_i}\right)_\v
    =
    \sum_{\v\in[\db]}
    \prod_{i=1}^k \sigma^{(i)}_{u_iv_i}w_{v_ip_i}^{(i)}
    =
    \prod_{i=1}^k
    \left(\Sigma^{(i)}\w_{p_i}\right)_{u_i}.
    \]
    Using the fact that $\w_p^{(i)}$ is an eigenvector of $\Sigma^{(i)}$ we see that 
    \[
    \T\left(\bigotimes_{i=1}^k \w^{(i)}_{p_i}\right)_\u
    =
    \prod_{i=1}^k \lambda_{p_i}^{(i)}w_{u_ip_i}^{(i)}
    =
    \prod_{i=1}^k \lambda_{p_i}^{(i)}
    \left(
        \bigotimes_{i=1}^k\w_{p_i}^{(i)}
    \right)_{\u}
    \]
    Thus we see that the spectrum of $\T$ is given by the family $(\prod_{i=1}^k\lambda_{p_i}^{(i)})_{\mathbf{p}\in [\db]}$ with eigenvectors $(\bigotimes_{i=1}^k\w_{p_i}^{(i)})_{\mathbf{p}\in[\db]}$. $\T$ being an operator from $\R^\db$ to $\R^\db$ this gives the whole spectrum of $\T$. In particular, since we have 
    \[
    \frac{1}{\vert \db\vert}\sum_{\mathbf{p}\in [\db]}\delta_{\prod_{i=1}^k\lambda_{p_i}^{(i)}}\xrightarrow[n\to\infty]{(d)}\mu_1\circledast\dots\circledast\mu_r,
    \]
    we obtain 
    \[
    \mathrm{limspec}(\T) = \mu_1\circledast\dots\circledast\mu_r.
    \]
    Since we suppose that $\Vert \Sigma^{(i)}\Vert \leqslant C$  for all $i\in[k]$, we see that $\Vert \T\Vert \leqslant C$.
\end{proof}
\paragraph{Concentration of quadratic forms.} The goal of this paragraph is to prove \eqref{eq:concquad} given by the following lemma 
\begin{proposition}\label{prop:concquad}
    Let $p\in[n]$ and $\B\in\R^\db\otimes\R^\db$ be a deterministic tensor with bounded norm, we have 
    \begin{equation}\label{eq:concquadbound}
    \frac{1}{n^2}\E\left[
        \left\vert 
            \A_{\cdot p}^\top\B\A_{\cdot p}-\Tr(\B\T)
        \right\vert^2
    \right]
    =
    \mathcal{O}_{k,C,\kappa_4,\Vert \B\Vert}\left(\frac{1}{d_{\min}}\right)
    \end{equation}
    where $d_{\min} = \min_{1\leqslant i\leqslant k} d_i$ and uniformly in $p\in[n]$.
\end{proposition}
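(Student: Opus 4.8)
The plan is to expand the quadratic form in coordinates, identify its mean with $\Tr(\B\T)$, and then show that its variance is $\O{n^2/d_{\min}}$, the gain of $1/d_{\min}$ coming from the independence of the $k$ families of rows. Writing $\A_{\cdot p}=\W^p=\bigotimes_{i=1}^k\x_p^{(i)}$ and suppressing the fixed index $p$, we have $\A_{\cdot p}^\top\B\A_{\cdot p}=\sum_{\u,\v\in[\db]}\B_{\u\v}W_\u W_\v$ with $W_\u=\prod_{i=1}^k x^{(i)}_{u_i}$. Since the $\X{i}$ are independent, $\E[W_\u W_\v]=\prod_{i=1}^k\sigma^{(i)}_{u_iv_i}=\T_{\u\v}$ by \Cref{lem:covtensor}, so the mean is exactly $\Tr(\B\T)$ and it remains to control the variance.

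For the variance, independence across $i$ factorizes the fourth mixed moment, $\E[W_\u W_\v W_{\u'}W_{\v'}]=\prod_{i=1}^k\E[x^{(i)}_{u_i}x^{(i)}_{v_i}x^{(i)}_{u'_i}x^{(i)}_{v'_i}]$, and I would decompose each factor through the fourth-cumulant identity as $P_i+R_i+K_i$, where $P_i=\sigma^{(i)}_{u_iv_i}\sigma^{(i)}_{u'_iv'_i}$ is the self-pairing, $R_i=\sigma^{(i)}_{u_iu'_i}\sigma^{(i)}_{v_iv'_i}+\sigma^{(i)}_{u_iv'_i}\sigma^{(i)}_{v_iu'_i}$ gathers the two cross-pairings, and $K_i=\kappa(x^{(i)}_{u_i},x^{(i)}_{v_i},x^{(i)}_{u'_i},x^{(i)}_{v'_i})$. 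Expanding the product and subtracting $(\Tr(\B\T))^2$ removes precisely the term in which every coordinate carries a self-pairing, leaving
\[
\Var\left(\A_{\cdot p}^\top\B\A_{\cdot p}\right)=\sum_{\u,\v,\u',\v'}\B_{\u\v}\B_{\u'\v'}\left(\prod_{i=1}^k(P_i+R_i+K_i)-\prod_{i=1}^k P_i\right),
\]
a sum over finitely many \emph{patterns} assigning to each coordinate a self-pairing, a cross-pairing, or a cumulant, none of them the all-self-pairing assignment.

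It then suffices to bound each pattern by $\O{n^2/d_{\min}}$. Fixing a pattern, partition $[k]=S_P\sqcup S_R\sqcup J$ according to whether the coordinate carries a self-pairing, a cross-pairing, or a cumulant; since the pattern is not the all-self-pairing one, $S_R\cup J\neq\emptyset$. I would first contract out the self-pairing coordinates, defining the reduced operator $\widehat\B$ on $\bigotimes_{i\in S_R\cup J}\Rbb^{d_i}$ by summing $\B$ against $\prod_{i\in S_P}\sigma^{(i)}_{u_iv_i}$ in each copy; a Cauchy--Schwarz estimate then gives $\norm{\widehat\B}_{HS}^2\le\big(\prod_{i\in S_P}\Tr((\Sigma^{(i)})^2)\big)\norm{\B}_{HS}^2\le C^{2|S_P|}\big(\prod_{i\in S_P}d_i\big)\norm{\B}_{HS}^2$. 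Using $\norm{\B}_{HS}^2\le|\db|\,\norm{\B}^2$ and the elementary inequality $\prod_{i\in S_P}d_i\le|\db|/d_{\min}$ (valid because $S_P\neq[k]$) already extracts the desired gain. The residual sum is a quadratic form $\widehat{\mathbf{b}}^\top\mathcal{M}\widehat{\mathbf{b}}$ in $\widehat{\mathbf{b}}=\mathrm{vec}(\widehat\B)$ against the kernel $\mathcal{M}=\bigotimes_{i\in J}\mathcal{K}^{(i)}\otimes\bigotimes_{i\in S_R}\mathcal{R}^{(i)}$, where $\mathcal{K}^{(i)}_{(u_iv_i)(u'_iv'_i)}=\kappa(x^{(i)}_{u_i},x^{(i)}_{v_i},x^{(i)}_{u'_i},x^{(i)}_{v'_i})$ and, up to an index transposition, $\mathcal{R}^{(i)}=\Sigma^{(i)}\otimes\Sigma^{(i)}$. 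Its operator norm factorizes over $i$: the cross-pairing blocks satisfy $\norm{\mathcal{R}^{(i)}}\le\norm{\Sigma^{(i)}}^2\le C^2$, while the Schur test together with the hypothesis $\sup_{q,r}\sum_{s,t}|\kappa(\cdots)|\le\kappa_4$ gives $\norm{\mathcal{K}^{(i)}}\le\kappa_4$. Hence $|\widehat{\mathbf{b}}^\top\mathcal{M}\widehat{\mathbf{b}}|\le\kappa_4^{|J|}C^{2|S_R|}\norm{\widehat\B}_{HS}^2=\O{|\db|^2\norm{\B}^2/d_{\min}}=\O{n^2/d_{\min}}$, since $|\db|=\O{n}$.

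Summing over the at most $4^k$ patterns preserves the estimate, and since all constants depend only on $k,C,\kappa_4,\norm{\B}$ the bound is uniform in $p$. The step I expect to be the main obstacle is the treatment of the cumulant coordinates: bounding the $\sigma$-entries termwise in absolute value destroys the estimate (quantities such as $\sum_{q,r}|\sigma^{(i)}_{qr}|$ are not controlled by $\norm{\Sigma^{(i)}}$ alone), and the correct viewpoint is that the fourth-cumulant hypothesis is exactly a uniform bound on the row and column sums of the kernel $\mathcal{K}^{(i)}$, so that the Schur test controls its operator norm. The second delicate point is the bookkeeping ensuring that the factor $1/d_{\min}$ is extracted from whichever coordinate departs from the self-pairing pattern, which is why the reduction is organized around $S_P$ and the inequality $\prod_{i\in S_P}d_i\le|\db|/d_{\min}$.
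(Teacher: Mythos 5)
Your proof is correct, and it reaches the bound by a genuinely different organization than the paper's. The paper proves Proposition \ref{prop:concquad} by induction on $k$: Lemma \ref{lem:I} shows that any monomial containing a self-pairing factor $\I^{(j)}$ contracts to the same problem on $\bigotimes_{i\neq j}\Rbb^{d_i}$ --- crucially with \emph{operator-norm} control $\norm{\M}\leqslant \norm{\B}\,\Vert\Sigma^{(j)}\Vert$ of the contracted tensor, so that the inductive hypothesis applies with effective sample size $n_j=n/d_j$ --- with Lemma \ref{lem:k=1} as base case; Lemma \ref{lem:II-III} bounds the pure $\II/\III$ monomials by Cauchy--Schwarz in Frobenius norm, and Lemma \ref{lem:IV} handles monomials containing a cumulant via the Schur test. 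You avoid the induction altogether: you contract out \emph{all} self-pairing coordinates at once, keeping only Frobenius-norm control $\norm{\widehat{\B}}_{\mathrm{F}}^2\leqslant C^{2\vert S_P\vert}\left(\prod_{i\in S_P}d_i\right)\norm{\B}_{\mathrm{F}}^2$, and then dispatch every remaining coordinate in one stroke through the operator norm of a tensor-product kernel, with the Schur test giving $\Vert\mathcal{K}^{(i)}\Vert\leqslant\kappa_4$ --- the same way the paper exploits the fourth-cumulant hypothesis, and indeed the step you correctly identified as the crux. In your argument the $1/d_{\min}$ gain comes from the single inequality $\prod_{i\in S_P}d_i\leqslant\vert\db\vert/d_{\min}$, whereas the paper extracts it from the inductive reduction down to the $k=1$ base case; your route is flatter and even slightly sharper on the pure cross-pairing patterns (you get $\vert\db\vert/n^2\asymp 1/n$ where Lemma \ref{lem:II-III} gets $\vert\db\vert^{3/2}/n^2\asymp 1/\sqrt{n}$), while the paper's induction isolates a reusable one-dimensional computation and keeps operator-norm (rather than only Frobenius-norm) structure explicit at every stage. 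One harmless slip: since your $\mathcal{R}^{(i)}$ gathers \emph{both} cross-pairings, its norm bound should be $2\Vert\Sigma^{(i)}\Vert^2\leqslant 2C^2$ rather than $C^2$ (the second pairing is $\Sigma^{(i)}\otimes\Sigma^{(i)}$ composed with an index swap, which is an isometry); this only affects the $k$-dependent constant and nothing else.
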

Before proving this proposition, we start by writing the quantity considered in the following way 
    \[
     \frac{1}{n^2}\E\left[
        \left\vert 
            \A_{\cdot p}^\top\B\A_{\cdot p}-\Tr(\B\T)
        \right\vert^2
    \right]
    =
    \frac{1}{n^2}\E\left[
            \left(
                \A_{\cdot p}^\top \B\A_{\cdot p}
            \right)^2
    \right] 
    -
    \frac{1}{n^2}\Tr(\B\T)^2.
    \]
    Now, since only the $p$-th ``column'' of $\A$ is considered, we abuse our notation and identify $\A_{\cdot p}\leftrightarrow \A$ so that we want to compute
    \[
    \frac{1}{n^2}\E\left[
        \left(
            \A^\top \B\A
        \right)^2
    \right]
    =
    \frac{1}{n^2}\sum_{\u,\v,\u',\v'}\E\left[a_{\u}a_{\v}a_{\u'}a_{\v'}\right]b_{\u\v}b_{\u'\v'}.
    \]
    With this abuse of notation, note that $a_\u = a_{u_1,\dots,u_k,p}$. Using the definition of $\A$ and that the matrices $X^{(i)}$ are independent, if we still forget the index $p$ which is ubiquitous in all terms of this proof, we get 
    \begin{equation}\label{eq:quaddev}
    \frac{1}{n^2}\E\left[
        \left(
            \A^\top \B\A
        \right)^2
    \right]
    =
    \frac{1}{n^2}\sum_{\u,\v,\u',\v'}\prod_{i=1}^k\E\left[
        x_{u_i}^{(i)}x_{v_i}^{(i)}x_{u'_i}^{(i)}x_{v'_i}^{(i)}
    \right] 
    b_{\u\v}b_{\u'\v'}.
    \end{equation}
    We now use the cumulant expansion to compute these fourth moment,
    \[
    \E\left[
        x_{u}x_{v}x_{u'}x_{v'}
    \right]
    =
    \sum_{\pi}\prod_{B\in \pi}\kappa\left(
        \{x_{u},u\in B\}
    \right)
    \]
    where the sum is over all partition of the multiset $\{u,v,u',v'\}.$ Since the entries of $X^{(i)}$ are centered, we see that the terms involving the $(1,1,1,1)$, $(2,1,1),$ $(3,1)$ partitions are zero and only the $(2,2)$ and $(4)$  partitions give nonzero terms. Thus we can write 
    \[
    \E\left[x_ux_vx_{u'}x_{v'}\right]
    =
    \sigma_{uv}\sigma_{u'v'}+\sigma_{uu'}\sigma_{vv'}+\sigma_{uv'}\sigma_{u'v}+\kappa(x_u,x_v,x_{u'},x_{v'})
    \]
    In the rest of the article, we denote 
    \[
    \I^{(i)}_{uvu'v'} = \sigma_{uv}^{(i)}\sigma_{u'v'}^{(i)},
    \quad
    \II^{(i)}_{uvu'v'} = \sigma_{uu'}^{(i)}\sigma_{vv'}^{(i)},
    \quad 
    \III^{(i)}_{uvu'v'} = \sigma_{uv'}^{(i)}\sigma_{u'v}^{(i)}
    \quad\text{and} \quad
    \IV^{(i)}_{uvu'v'} = \kappa(x_u^{(i)},x_v^{(i)},x_{u'}^{(i)},x_{v'}^{(i)})
    \]
    so that 
    \[
    \E\left[
        x_ux_vx_{u'}x_{v'}
    \right]
    =
    \left(
        \I+\II+\III+\IV
    \right)_{uvu'v'}.
    \]
    We obtain Proposition \ref{prop:concquad} by induction over the number of matrices in the Hadamard product $k$. Indeed, we can see that if we choose one term in the cumulant expansion to be $\sigma_{uv}\sigma_{u'v'}$, we can reduce the problem to prove the concentration of quadratic forms for a tensor with one less index. For this reason, we give here a lemma detailing the case $k=1$, this is an easier object but we give the proof here for the sake of completeness. In particular, $T\in\Rbb^{d_1\times d_1}$ is a matrix which is equal to 
    \[
    T = \Sigma^{(1)}.
    \] 
    \begin{lemma}\label{lem:k=1}
        Let $p\in[n]$ and $B\in \Rbb^{d_1\times d_1}$ be a deterministic matrix with bounded norm, we have 
        \[
        \frac{1}{n^2}\E\left[
            \left\vert   
                (\x_p^{(1)})^\top B \x_p^{(1)}-\Tr\left(B\Sigma^{(1)}\right)
            \right\vert^2
        \right]
        \leqslant    
        \left(
            2C_1^2+\kappa_4^{(1)}
        \right)\Vert B\Vert^2 \frac{d_1}{n^2}.
        \]
    \end{lemma}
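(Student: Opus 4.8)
The plan is to compute $\Var\big((\x_p^{(1)})^\top B \x_p^{(1)}\big)$ exactly through the fourth-moment cumulant expansion and to bound the three surviving terms. Write $\x = \x_p^{(1)}$ and $\Sigma = \Sigma^{(1)}$, with entries $\sigma_{qr} = \sigma^{(1)}_{qr}$. Since $\E[x_q x_r] = \sigma_{qr}$, one has $\E[\x^\top B\x] = \sum_{q,r} B_{qr}\sigma_{qr} = \Tr(B\Sigma)$, so the expression in the lemma is exactly $\frac{1}{n^2}\Var(\x^\top B\x)$. Expanding $\E[(\x^\top B\x)^2] = \sum_{q,r,s,t} B_{qr}B_{st}\,\E[x_q x_r x_s x_t]$ and inserting the decomposition $\E[x_q x_r x_s x_t] = \sigma_{qr}\sigma_{st} + \sigma_{qs}\sigma_{rt} + \sigma_{qt}\sigma_{rs} + \kappa(x_q,x_r,x_s,x_t)$ produces four sums. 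The first factorizes as $\big(\sum_{q,r}B_{qr}\sigma_{qr}\big)^2 = \Tr(B\Sigma)^2$ and cancels exactly against the centering, leaving only the three terms built from $\sigma_{qs}\sigma_{rt}$, $\sigma_{qt}\sigma_{rs}$ and the fourth cumulant.

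First I would dispose of the two Gaussian-type terms. Carrying out the index contractions (using that $\Sigma$ is symmetric) identifies them as traces, $\sum_{q,r,s,t} B_{qr}B_{st}\sigma_{qs}\sigma_{rt} = \Tr(B^\top\Sigma B\Sigma)$ and $\sum_{q,r,s,t} B_{qr}B_{st}\sigma_{qt}\sigma_{rs} = \Tr(B\Sigma B\Sigma)$. Since $\Sigma\succeq 0$, I factor $\Sigma = \Sigma^{1/2}\Sigma^{1/2}$ and set $Y = \Sigma^{1/2}B\Sigma^{1/2}$, so that the first trace equals $\Tr(Y^\top Y) = \norm{Y}_F^2$ and the second equals $\Tr(Y^2)$, which is bounded in absolute value by $\norm{Y}_F^2$ via Cauchy--Schwarz. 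Finally $\norm{Y}_F \le \norm{\Sigma^{1/2}}^2\norm{B}_F = \norm{\Sigma}\,\norm{B}_F \le C_1\norm{B}_F$ and $\norm{B}_F^2 \le d_1\norm{B}^2$, so each of these two terms is at most $C_1^2 d_1\norm{B}^2$, accounting for the $2C_1^2$ factor.

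The main obstacle is the cumulant term $S := \sum_{q,r,s,t} B_{qr}B_{st}\,\kappa(x_q,x_r,x_s,x_t)$, where the crude bound $|B_{qr}B_{st}|\le\norm{B}^2$ summed over four free indices would yield an unacceptable factor $d_1^2$. The key point is to exploit the full permutation symmetry of the fourth cumulant together with the elementary inequality $|B_{qr}B_{st}|\le\frac{1}{2}(B_{qr}^2 + B_{st}^2)$; after relabeling $(q,r)\leftrightarrow(s,t)$ the two halves coincide, giving $|S| \le \sum_{q,r} B_{qr}^2 \sum_{s,t} |\kappa(x_q,x_r,x_s,x_t)| \le \kappa_4^{(1)}\norm{B}_F^2 \le \kappa_4^{(1)} d_1\norm{B}^2$, where I used the defining bound $\sum_{s,t}|\kappa(x_q,x_r,x_s,x_t)|\le\kappa_4^{(1)}$ for each fixed pair $(q,r)$. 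Summing the three contributions gives $\Var(\x^\top B\x)\le(2C_1^2 + \kappa_4^{(1)})\,d_1\norm{B}^2$, and dividing by $n^2$ yields the stated inequality.
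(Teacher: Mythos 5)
Your proof is correct and follows essentially the same route as the paper: the same cumulant expansion with the $\sigma_{qr}\sigma_{st}$ term cancelling the centering, a bound of $C_1^2 d_1\Vert B\Vert^2$ for each of the two pairing terms, and $\kappa_4^{(1)}\Vert B\Vert_{\mathrm{F}}^2\leqslant \kappa_4^{(1)}d_1\Vert B\Vert^2$ for the fourth-cumulant term. The only differences are cosmetic: your symmetric factorization $Y=\Sigma^{1/2}B\Sigma^{1/2}$ repackages the paper's Cauchy--Schwarz bound on $\Vert \Sigma B\Vert_{\mathrm{F}}\Vert B\Sigma\Vert_{\mathrm{F}}$, and your AM--GM-plus-symmetry argument for the cumulant sum is precisely an inline proof of the Schur test that the paper invokes.
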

    \begin{proof}
        We start by doing the same process as above to write 
        \begin{multline*}
        \frac{1}{n^2}\E\left[
            \left\vert   
                (\x_p^{(1)})^\top B \x_p^{(1)}-\Tr\left(B\Sigma^{(1)}\right)
            \right\vert^2
        \right]
        \\=
        \frac{1}{n^2}\sum_{u,v,u',v'}
        \left(
            \sigma^{(1)}_{uv}\sigma^{(1)}_{u'v'}
            +
            \sigma^{(1)}_{uu'}\sigma^{(1)}_{vv'}+\sigma_{uv'}^{(1)}\sigma_{u'v}^{(1)}+\kappa(x_{pu}^{(1)},x_{pv}^{(1)},x_{pu'}^{(1)},x_{pv'}^{(1)})
        \right)b_{uv}b_{u'v'}-
        \frac{1}{n^2}\Tr(B\Sigma^{(1)})^2.
        \end{multline*}
        If we consider the first term, we obtain 
        \[
        \frac{1}{n^2}\sum_{u,v,u',v'}\sigma^{(1)}_{uv}b_{uv}\sigma^{(1)}_{u'v'}b_{uv}b_{u'v'}
        =
        \frac{1}{n^2}\left(
            \sum_{u,v=1}^{d_1} b_{uv}\sigma^{(1)}_{vu}
        \right)^2
        =
        \frac{1}{n^2}\Tr\left(B\Sigma^{(1)}\right)^2
        \]
        where we used the fact that $\Sigma^{(1)}$ is symmetric, this term thus cancels the centering. For the second term we have 
        \[
        \begin{aligned}
        \frac{1}{n^2}\left\vert\sum_{u,v,u',v'}\sigma_{uu'}^{(1)}\sigma_{vv'}^{(1)}b_{uv}b_{u'v'}\right\vert
        &=
        \frac{1}{n^2}\left\vert\sum_{u',v=1}^{d_1}
        \left(\Sigma^{(1)} B\right)_{u'v}\left(B\Sigma^{(1)}\right)_{u'v}\right\vert
        \\&\leqslant 
        \frac{1}{n^2}
        \left(
            \sum_{u',v=1}^{d_1}\left(\Sigma^{(1)} B\right)_{u'v}^2
        \right)^{\frac{1}{2}}
        \left(
            \sum_{u',v=1}^{d_1}\left(
                B\Sigma^{(1)}
            \right)
        \right)^{\frac{1}{2}}
        \\&=
        \frac{1}{n^2}\Vert \Sigma^{(1)}B\Vert_{\mathrm{F}}\Vert B\Sigma^{(1)}\Vert_{\mathrm{F}}
        \leqslant    
        \frac{d_1\Vert \Sigma^{(1)}\Vert^2\Vert B\Vert^2}{n^2}
        \leqslant C_1^2\Vert B\Vert^2\frac{d_1}{n^2}.
        \end{aligned}
        \]
        Similary, for the third term 
        \[
        \begin{aligned}
            \frac{1}{n^2}\left\vert\sum_{u,v,u',v'}\sigma^{(1)}_{uv'}\sigma^{(1)}_{u'v}b_{uv}b_{u'v'}\right\vert
            =
            \frac{1}{n^2}\left\vert\sum_{v,v'=1}^{d_1}\left(
                \Sigma^{(1)}B
            \right)_{vv'}\left(
                \Sigma^{(1)}B
            \right)_{v'v}\right\vert
            \end{aligned}
        \leqslant \frac{1}{n^2}\Vert\Sigma^{(1)}B\Vert_{\mathrm{F}}^2
        \leqslant C_1^2\Vert B\Vert^2\frac{d_1}{n^2}.
        \]
        Finally, for the term involving the fourth cumulant we can write, if we denote $\kappa_{uvu'v'}^{(1)}\coloneqq \kappa(x_{pu}^{(1)},x_{pv}^{(1)},x_{pu'}^{(1)},x_{pv'}^{(1)}),$ 
        \[
        \frac{1}{n^2}\left\vert\sum_{u,v,u',v'}\kappa_{uvu'v'}^{(1)}b_{uv}b_{u'v'}\right\vert    
        \leqslant
        \left(\sup_{1\leqslant u,v\leqslant d_1}\sum_{u'v'=1}^{d_1}\vert \kappa_{uvu'v'}^{(1)}\vert\right)
         \Vert B\Vert_{\mathrm{F}}^2\frac{1}{n^2}
         \leqslant \kappa_4^{(1)}\Vert B\Vert^2 \frac{d_1}{n^2}
        \]
        where we used the fact that $\vert \langle B,\mathcal{K}^{(1)}B\rangle\vert \leqslant \Vert B\Vert_F^2 \Vert \mathcal{K}^{(1)}\Vert_{\mathrm{F}\to\mathrm{F}}$ and the Schur test which gives 
        \[
        \Vert \mathcal{K}^{(1)}\Vert_{\mathrm{F}\to\mathrm{F}}
        \leqslant 
        \sqrt{
            \left(\sup_{1\leqslant u,v\leqslant d_1}\sum_{u',v'=1}^{d_1}\vert \kappa_{uvu'v'}^{(1)}\vert\right)\left(\sup_{1\leqslant u',v'\leqslant d_1}\sum_{u,v=1}^{d_1}\vert \kappa_{uvu'v'}^{(1)}\vert\right)
        } 
        = 
        \sup_{1\leqslant u,v\leqslant d_1}\sum_{u'v'=1}^{d_1}\vert \kappa_{uvu'v'}^{(1)}\vert 
        \] by symmetry of $\mathcal{K}^{(1)}$.
    \end{proof}
    The following lemma states that if one chooses the term $\I^{(i)}$ in the product in \eqref{eq:quaddev}, it corresponds to the same bound \eqref{eq:concquadbound} for a lower dimensional problem. Thus, with the initialization step done by the previous lemma in the one-dimensional case, this controls every term involving at least one $\I^{(i)}$.
    \begin{lemma}\label{lem:I}
        In the expansion of \[\prod_{i=1}^k \left(\I^{(i)}+\II^{(i)}+\III^{(i)}+\IV^{(i)}\right)_{\u\v\u'\v'}\] that appears in the quantity \eqref{eq:quaddev}, any monomial that contains $\I^{(j)}$ for at least one index $j$ reduces the bound \eqref{eq:concquadbound} to the $k-1$-tensor case i.e. the same expression but on $\bigotimes_{i\neq j}\Rbb^{d_i}$. \\
        Thus, if the $k-1$-tensor version of the bound \eqref{eq:concquadbound} holds then each monomial in the expansion that contains at least one $\I^{(j)}$ contributes at most this bound. 
    \end{lemma}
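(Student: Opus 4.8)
The plan is to exhibit the reduction as an exact algebraic identity and then to run the induction on the scale-explicit form of the bound, so that Lemma \ref{lem:I} becomes bookkeeping of dimensional factors. Fix a monomial in the expansion of $\prod_{i=1}^k(\I^{(i)}+\II^{(i)}+\III^{(i)}+\IV^{(i)})_{\u\v\u'\v'}$ whose factor at some index $j$ is $\I^{(j)}_{u_jv_ju'_jv'_j}=\sigma^{(j)}_{u_jv_j}\sigma^{(j)}_{u'_jv'_j}$, and let $t^{(i)}$ be its factors at the indices $i\neq j$. First I would observe that the two scalars $\sigma^{(j)}_{u_jv_j}$ and $\sigma^{(j)}_{u'_jv'_j}$ are carried respectively by $b_{\u\v}$ and $b_{\u'\v'}$, while no $t^{(i)}$ with $i\neq j$ depends on $u_j,v_j,u'_j,v'_j$. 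Hence, writing $\hat\u=(u_i)_{i\neq j}$ and likewise for $\hat\v,\hat\u',\hat\v'$, the sums over $u_j,v_j$ and over $u'_j,v'_j$ detach from each copy of $\B$ and produce exactly the corresponding $(k-1)$-tensor monomial
\[
\frac{1}{n^2}\sum_{\hat\u,\hat\v,\hat\u',\hat\v'}\prod_{i\neq j}t^{(i)}_{u_iv_iu'_iv'_i}\,\tilde b_{\hat\u\hat\v}\,\tilde b_{\hat\u'\hat\v'},\qquad \tilde b_{\hat\u\hat\v}:=\sum_{u_j,v_j=1}^{d_j}\sigma^{(j)}_{u_jv_j}\,b_{\u\v},
\]
on $\bigotimes_{i\neq j}\Rbb^{d_i}$, with the deterministic tensor $\tilde\B$ replacing $\B$. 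This is the reduction. Applied to a monomial other than $\prod_i\I^{(i)}$ (the all-$\I$ term, which cancels the centering $\Tr(\B\T)^2/n^2$), the surviving non-$\I$ factor sits at some $i\neq j$, so the reduced monomial is again non-central at level $k-1$.

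The one substantive point, and where I expect the main difficulty, is the control of $\norm{\tilde\B}$: the map $\B\mapsto\tilde\B$ is a $\Sigma^{(j)}$-weighted partial trace over the $j$-th slot, whose operator norm is \emph{not} bounded uniformly in $d_j$. I would handle it spectrally: writing $\Sigma^{(j)}=\sum_s\lambda^{(j)}_s\w^{(j)}_s(\w^{(j)}_s)^\top$ with $\lambda^{(j)}_s\ge 0$, and letting $P_s:\bigotimes_{i\neq j}\Rbb^{d_i}\to\R^\db$ be the isometry inserting $\w^{(j)}_s$ into the $j$-th factor, comparison of entries gives $\tilde\B=\sum_s\lambda^{(j)}_s\,P_s^\top\B P_s$. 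As each $P_s$ is an isometry, $\norm{P_s^\top\B P_s}\le\norm{\B}$, and nonnegativity of the weights yields
\[
\norm{\tilde\B}\le\Big(\sum_s\lambda^{(j)}_s\Big)\norm{\B}=\Tr\big(\Sigma^{(j)}\big)\norm{\B}\le C\,d_j\,\norm{\B}.
\]

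Finally I would close the induction, carried in the scale-explicit form $\tfrac{1}{n^2}\E[|\,\cdots\,|^2]\lesssim_{k,C,\kappa_4}\norm{\B}^2\,\vert\db\vert^2/(d_{\min}n^2)$, which specializes to Lemma \ref{lem:k=1} at $k=1$ and gives $\O{1/d_{\min}}$ once $n\asymp\vert\db\vert$. It is essential to use this homogeneous form rather than the qualitative $\O{1/d_{\min}}$, since after reduction the ratio $n/\prod_{i\neq j}d_i$ no longer stays bounded. Applying the $(k-1)$-level bound to the reduced monomial in $\tilde\B$ and substituting $\norm{\tilde\B}\le C d_j\norm{\B}$, the factor $d_j^2$ from $\norm{\tilde\B}^2$ is cancelled exactly by $\big(\prod_{i\neq j}d_i\big)^2=\vert\db\vert^2/d_j^2$, while $\min_{i\neq j}d_i\ge d_{\min}$ can only help; the monomial therefore contributes $\lesssim\norm{\B}^2\,\vert\db\vert^2/(d_{\min}n^2)=\O{1/d_{\min}}$, uniformly in $p$. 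This is also the step responsible for the rate being $1/d_{\min}$ rather than $1/\vert\db\vert$. With Lemma \ref{lem:k=1} as the base case, peeling one $\I^{(j)}$ at a time controls every monomial carrying at least one $\I$; the monomials with no $\I$ factor lie outside this lemma and must be bounded directly, by extending the Frobenius-norm and Schur-test estimates of Lemma \ref{lem:k=1}.
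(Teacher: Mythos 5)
Your proof is correct, and its core mechanism is the same as the paper's: contract the $j$-th slot of $\B$ against $\Sigma^{(j)}$ to obtain a deterministic tensor on $\bigotimes_{i\neq j}\Rbb^{d_i}$, control its operator norm, and invoke the $(k-1)$-level bound. Three execution choices differ, and the comparison is instructive. First, the paper reduces in aggregate rather than monomial-by-monomial: it keeps the full fourth-moment product $\prod_{i\geq 2}\E[x^{(i)}_{u_i}x^{(i)}_{v_i}x^{(i)}_{u'_i}x^{(i)}_{v'_i}]$ at the remaining slots, so the reduced quantity is literally the $(k-1)$-tensor instance of \eqref{eq:concquadbound} (the all-$\I$ part of the reduced expansion reproduces the centering term $\Tr(\B\T)^2/n^2$), and the lemma's conditional hypothesis applies verbatim; your per-monomial reduction instead requires a per-monomial inductive bound at level $k-1$, which is indeed what the surrounding lemmas supply, but is formally stronger than the aggregate bound the lemma conditions on. Second, the paper renormalizes: it sets $m_{\u\v}=\frac{1}{d_1}\sum_{u_1,v_1}\sigma^{(1)}_{u_1v_1}b_{u_1\u v_1\v}$ and $n_1=n/d_1$, so that $\norm{\M}\leqslant \norm{\Sigma^{(1)}}\norm{\B}$ stays bounded and $n_1/\prod_{i\geq 2}d_i\to\gamma$, i.e.\ the reduced problem satisfies the original assumptions with no change; you keep $\tilde\B=d_j\M$ unnormalized and compensate with the homogeneous form $\norm{\B}^2\vert\db\vert^2/(d_{\min}n^2)$, in which the $d_j^2$ cancels — equivalent bookkeeping, and your warning that the qualitative $\O{1/d_{\min}}$ cannot be applied blindly (because the implied constant depends on the tensor norm and the reduced aspect ratio degenerates) is exactly the issue the paper's rescaling is designed to remove. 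Third, for the norm control the paper argues by duality, testing $\langle \F,\M\G\rangle$ against unit-Frobenius tensors to get $\norm{\M}\leqslant\norm{\Sigma^{(1)}}\norm{\B}$, whereas your spectral identity $\tilde\B=\sum_s\lambda^{(j)}_sP_s^\top\B P_s$ with isometries $P_s$ gives $\norm{\tilde\B}\leqslant\Tr(\Sigma^{(j)})\norm{\B}$, a marginally sharper and arguably cleaner estimate. Both routes produce the same $\O{1/d_{\min}}$ contribution, uniformly in $p$.
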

    \begin{proof}
    Without loss of generality, we suppose that $\I^{(1)}$ is chosen in the first term of the product in \eqref{eq:quaddev}, then we want to bound 
    \begin{multline}\label{eq:ind1}
    \frac{1}{n^2}\sum_{\substack{u_1,v_1,u'_1,v'_1\\ \u,\v,\u',\v'}} \I^{(1)}_{u_1v_1u'_1v'_1}\prod_{i=2}^{k}\E\left[
        x^{(i)}_{u_i}x_{v_i}^{(i)}x_{u'_i}^{(i)}x_{v'_i}^{(i)}
    \right]
    b_{u_1\u v_1\v}b_{u'_1\u' v'_1\v'}
    \\[-2ex]=
    \frac{1}{n^2}\sum_{\substack{u_1,v_1,u'_1,v'_1\\ \u,\v,\u',\v'}} \sigma^{(1)}_{u_1v_1}\sigma^{(1)}_{u'_1v'_1}\prod_{i=2}^{k}\E\left[
        x^{(i)}_{u_i}x_{v_i}^{(i)}x_{u'_i}^{(i)}x_{v'_i}^{(i)}
    \right]
    b_{u_1\u v_1\v}b_{u'_1\u' v'_1\v'}
    \end{multline}
    where in the sum we have $\u,\v,\u',\v'\in \prod_{i=2}^k \Rbb^{d_i}$. Define 
    \[
    m_{\u\v} = \frac{1}{d_1}\sum_{u_1,v_1=1}^{d_1} 
    \sigma^{(1)}_{u_1v_1}b_{u_1\u v_1\v}
    \]
    so that we can rewrite \eqref{eq:ind1} as 
    \[
    \eqref{eq:ind1} = \frac{d_1^2}{n^2}\sum_{\u,\v,\u',\v'}
    \prod_{i=2}^k \E\left[
        x^{(i)}_{u_i}x_{v_i}^{(i)}x_{u'_i}^{(i)}x_{v'_i}^{(i)}
    \right]
    m_{\u\v}m_{\u'\v'}
    =
    \frac{1}{n_1^2}\sum_{\u,\v,\u',\v'}
    \prod_{i=2}^k \E\left[
        x^{(i)}_{u_i}x_{v_i}^{(i)}x_{u'_i}^{(i)}x_{v'_i}^{(i)}
    \right]
    m_{\u\v}m_{\u'\v'}
    \]
    with $n_1\coloneqq \frac{n}{d_1}$. In particular we have that $\frac{n_1}{\prod_{i=2}^k d_i}\to \gamma$. Hence, we are precisely in the setting where we must bound the Hadamard product of $k-1$ independent random matrices, assuming that $\Vert \M\Vert$ is bounded. We now proceed to show this. Let $\F,\G\in \bigotimes_{i=2}^k \Rbb^{d_i}$ be two tensors such that $\Vert \F\Vert_{\mathrm{F}} = \Vert \G\Vert_{\mathrm{F}}=1$ then 
    \[
    \langle \F,\M\G\rangle
    =
    \sum_{\u,\v} f_\u g_\v m_{\u\v} = \frac{1}{d_1}\sum_{\substack{u_1,v_1\\ \u,\v}}f_\u g_\v\sigma^{(1)}_{u_1v_1}b_{u_1\u v_1\v}.
    \]
    If we define 
    \[
    \G^{(1)} = \Sigma^{(1)}_{u_1\cdot}\otimes \G\in \bigotimes_{i=1}^k \Rbb^{d_i} \quad\text{or in other words}\quad 
    g^{(1)}_{v_1 \v} = \sigma^{(1)}_{u_1v_1}g_{\v}
    \]
    then we can write 
    \[
    \langle \F,\M\G\rangle 
    =
    \frac{1}{d_1}\sum_{u_1,\u}f_\u \left[\B\G^{(1)}\right]_{u_1\u}  
    =
    \frac{1}{d_1}\sum_{u_1=1}^{d_1}
        \left\langle \left(
            \B\G^{(1)}
        \right)_{u_1\cdot},
        \F
        \right\rangle.
    \]
    But we have 
    \[
    \left\vert   
        \left\langle \left(
            \B\G^{(1)}
        \right)_{u_1\cdot},
        \F
        \right\rangle
    \right\vert
    \leqslant \left\Vert \B\G^{(1)}\right\Vert_{\mathrm{F}}\Vert \F\Vert_{\mathrm{F}}
    \leqslant \Vert \B\Vert \left\Vert \G^{(1)}\right\Vert_{\mathrm{F}}
    \]
    using the fact that $\Vert\F\Vert_{\mathrm{F}}=1$. Now we have that 
    \[
    \left\Vert \G^{(1)}\right\Vert_{\mathrm{F}}^2   = \sum_{v_1\v}\sigma_{u_1v_1}^2g_\v^2 = \Vert \G\Vert_{\mathrm{F}}^2 \sum_{v_1=1}^{d_1}\sigma_{u_1v_1}^2 \leqslant \Vert \Sigma^{(1)}\Vert^2.
    \]
    Finally, we obtain the bound 
    \[
    \Vert \M\Vert \leqslant \Vert \B\Vert \Vert\Sigma^{(1)}\Vert.
    \]
    \end{proof}
    
    Next, we consider the case where only $\II$ and $\III$ occur in the product in \eqref{eq:quaddev}.
    
\begin{lemma}\label{lem:II-III}
   We have
    \[
        \frac{1}{n^2}\sum_{\u,\v,\u',\v'}
        \left(
            \prod_{i=1}^k \II^{(i)}_{}+\III^{(i)}
        \right)_{\u\v\u'\v'}b_{\u\v}b_{\u'\v'}\ 
        \leqslant 
        2^kC^{2k}\Vert \B\Vert^2\frac{\vert \db\vert^{\frac{3}{2}}}{n^2}
    \]
\end{lemma}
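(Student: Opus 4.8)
The plan is to read the entire left-hand side as a single quadratic form in the vectorization of $\B$ and to control it by an operator norm. Writing $b=(b_{\u\v})_{\u,\v\in[\db]}$ for the entries of $\B$ viewed as a vector in $\R^\db\otimes\R^\db$, the sum is exactly $\frac{1}{n^2}\langle b,\Theta b\rangle$, where $\Theta$ is the operator on $\R^\db\otimes\R^\db$ whose kernel is the product that appears in the statement,
\[
\Theta_{(\u,\v),(\u',\v')}=\prod_{i=1}^k\left(\II^{(i)}+\III^{(i)}\right)_{u_iv_iu'_iv'_i}=\prod_{i=1}^k\left(\sigma^{(i)}_{u_iu'_i}\sigma^{(i)}_{v_iv'_i}+\sigma^{(i)}_{u_iv'_i}\sigma^{(i)}_{v_iu'_i}\right).
\]
Since both copies of $b$ in the sum are the same tensor, this is a genuine quadratic form, so it suffices to estimate $\Vert\Theta\Vert$ and invoke $\left\vert\langle b,\Theta b\rangle\right\vert\le\Vert\Theta\Vert\,\Vert\B\Vert_{\mathrm F}^2$.

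The heart of the argument is to expose the tensor structure of $\Theta$. After the orthogonal (hence norm-preserving) reordering of the $2k$ tensor legs that pairs the $i$-th $u$-slot with the $i$-th $v$-slot, $\Theta$ factorizes as $\Theta\cong\bigotimes_{i=1}^k\Theta_i$ with
\[
\Theta_i=\left(\Sigma^{(i)}\otimes\Sigma^{(i)}\right)\left(\Id+\tau_i\right),
\]
where $\tau_i$ is the swap of the two copies of $\Rbb^{d_i}$: the $\II^{(i)}$ contribution is the untwisted block $\Sigma^{(i)}\otimes\Sigma^{(i)}$, while the $\III^{(i)}$ contribution is the same block precomposed with $\tau_i$, which is precisely the identity $\sigma^{(i)}_{u_iv'_i}\sigma^{(i)}_{v_iu'_i}=\big((\Sigma^{(i)}\otimes\Sigma^{(i)})\tau_i\big)_{(u_i,v_i),(u'_i,v'_i)}$. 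Because $\tau_i$ is a self-adjoint involution with eigenvalues $\pm1$ it satisfies $\Vert\Id+\tau_i\Vert=2$, and it commutes with $\Sigma^{(i)}\otimes\Sigma^{(i)}$ (both legs carry the same $\Sigma^{(i)}$); hence $\Vert\Theta_i\Vert\le2\Vert\Sigma^{(i)}\Vert^2\le2C^2$, so that
\[
\Vert\Theta\Vert=\prod_{i=1}^k\Vert\Theta_i\Vert\le 2^kC^{2k}.
\]

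To conclude, note that $\B$ is an operator on the $\vert\db\vert$-dimensional space $\R^\db$, so $\Vert\B\Vert_{\mathrm F}^2\le\vert\db\vert\,\Vert\B\Vert^2$, while $\Vert b\Vert_2^2=\Vert\B\Vert_{\mathrm F}^2$; combining with the norm bound gives
\[
\frac{1}{n^2}\left\vert\langle b,\Theta b\rangle\right\vert\le\frac{\Vert\Theta\Vert}{n^2}\Vert\B\Vert_{\mathrm F}^2\le\frac{2^kC^{2k}\vert\db\vert}{n^2}\Vert\B\Vert^2\le 2^kC^{2k}\Vert\B\Vert^2\frac{\vert\db\vert^{3/2}}{n^2},
\]
the last step being the crude estimate $\vert\db\vert\le\vert\db\vert^{3/2}$ (so this route in fact yields the sharper power $\vert\db\vert$, which is all the better for Proposition \ref{prop:concquad}). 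I expect the only real obstacle to be bookkeeping rather than analysis: correctly matching each of the $2^k$ monomials of the expansion to an untwisted or twisted block and verifying that every $\tau_i$ commutes with $\Sigma^{(i)}\otimes\Sigma^{(i)}$, so that the operator norm genuinely multiplies across the $k$ factors. If one prefers to avoid the tensor-reordering formalism, the same bound follows monomial by monomial: each kernel equals $(\Sigma\otimes\Sigma)P_S$ with $\Sigma=\bigotimes_{i=1}^k\Sigma^{(i)}$ and $P_S$ an orthogonal permutation of the legs determined by the chosen subset of $\III$ terms, after which $\langle b,(\Sigma\otimes\Sigma)P_S b\rangle$ is handled by Cauchy--Schwarz and the bound $\Vert\Sigma\Vert\le C^k$.
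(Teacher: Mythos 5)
Your proof is correct, and it takes a genuinely different route from the paper. The paper argues monomial by monomial: it fixes which factors carry $\II$ and which carry $\III$, rewrites the sum as $\frac{1}{n^2}\sum_{\u',\v'}\langle \A^{\u'_{1:m},\v'_{m+1:k}},\B\,\A^{\v'_{1:m},\u'_{m+1:k}}\rangle b_{\u'\v'}$, and applies Cauchy--Schwarz over the primed indices; the price of this is that the covariances enter through their Frobenius norms, $\Vert\Sigma^{(i)}\Vert_{\mathrm F}\le\sqrt{d_i}\,\Vert\Sigma^{(i)}\Vert$, and together with $\Vert\B\Vert_{\mathrm F}\le\sqrt{\vert\db\vert}\,\Vert\B\Vert$ this produces the stated $\vert\db\vert^{3/2}$, with the $2^k$ counted at the end. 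You instead keep the whole sum as a single quadratic form $\frac{1}{n^2}\langle b,\Theta b\rangle$ and exploit the tensor-product structure of the kernel: after the leg-reordering (the same braid/interlacing idea the paper already uses in Lemma~\ref{lem:covtensor} for $\T$), $\Theta\cong\bigotimes_i(\Sigma^{(i)}\otimes\Sigma^{(i)})(\Id+\tau_i)$, so the operator norm multiplies across factors and the $2^k$ emerges structurally as $\prod_i\Vert\Id+\tau_i\Vert$. Your identifications check out: $\II^{(i)}$ is the untwisted block, $\III^{(i)}$ is the same block composed with the swap (using symmetry of $\Sigma^{(i)}$), and $\vert\langle b,\Theta b\rangle\vert\le\Vert\Theta\Vert\Vert\B\Vert_{\mathrm F}^2$ needs no self-adjointness, just Cauchy--Schwarz (the commutation remark about $\tau_i$ is true but not even needed, since submultiplicativity of the operator norm suffices). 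What your approach buys is a sharper rate, $\vert\db\vert/n^2$ instead of $\vert\db\vert^{3/2}/n^2$, because the only dimensional loss is $\Vert\B\Vert_{\mathrm F}^2\le\vert\db\vert\,\Vert\B\Vert^2$ rather than also paying Frobenius norms of the $\Sigma^{(i)}$; this does not change the final conclusion of Proposition~\ref{prop:concquad}, whose error is dominated by the $1/d_{\min}$ coming from the fourth-cumulant terms, but it is cleaner. It is also philosophically the same device the paper deploys later in Lemma~\ref{lem:IV} (bounding $\langle\B,\mathcal K\B\rangle$ by $\Vert\B\Vert_{\mathrm F}^2$ times an operator norm, there estimated by a Schur test), applied here with the exact tensor-factorized norm instead.
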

     
\begin{proof}
    Without loss of generality, we consider the following form
    \begin{equation}\label{eq:form}
     \frac{1}{n^2}\sum_{\u,\v,\u',\v'}
     \left(
        \prod_{i=1}^m \II^{(i)}_{u_iv_iu'_iv'_i} \prod_{j=m+1}^{k}\III^{(j)}_{u_jv_ju'_jv'_j}
    \right)b_{\u\v}b_{\u'\v'}
    \end{equation}
    as the other forms follow by symmetry. Note that the case $m=0$ consists in only $\III^{(i)}$ terms and $m=k$ consists in only $\II^{(i)}$ terms.  By definition, we can rewrite \eqref{eq:form}
    as 
    \[
    \eqref{eq:form}
    =
    \frac{1}{n^2}\sum_{\u,\v,\u',\v'}\prod_{i=1}^m \sigma_{u_iu_i'}^{(i)}\sigma_{v_iv_i'}^{(i)} \prod_{j=m+1}^{k}\sigma_{u_jv_j'}^{(j)}\sigma_{u_j'v_j}^{(j)}b_{\u\v}b_{\u'\v'}
    \]
    and let us denote for $m\in\{0,\dots, k\}$,
    \[\A_{\u_{1:m},\u_{m+1:k}}^{\u'_{1:m},\v'_{m+1:k}} = \prod_{i=1}^m \sigma_{u_iu_i'}^{(i)}\prod_{j=m+1}^k\sigma_{u_jv'_j}^{(j)}
    \quad
    \text{and} \quad\A_{\v_{1:m},\v_{m+1:k}}^{\v'_{1:m},\u'_{m+1:k}} = \prod_{i=1}^m \sigma_{v_iv_i'}^{(i)}\prod_{j=m+1}^k\sigma_{v_ju'_j}^{(j)}
    \]
     where we introduced the notation $\u_{i:j}$ = $(u_i,...,u_j)$,
    $\v_{i:j}$ = $(v_i,...,v_j)$
    and $\u_{i:j},\v_{l:m} = (u_i,...,u_j,v_l...v_m)$
    then \eqref{eq:form} becomes 
    \[
    \begin{aligned}
     \eqref{eq:form} 
    =
    \frac{1}{n^2}\sum_{\u',\v'}
    \left\langle
        \A^{\u'_{1:m},\v'_{m+1:k}},\B \A^{\v'_{1:m},\u'_{m+1:k}}
    \right\rangle b_{\u'\v'}
    &\leq  \frac{1}{n^2} \Vert\B\Vert_{\mathrm{F}}
    \sqrt{
        \sum_{\u',\v'}\left\langle
            \,\A^{\u'_{1:m},\v'_{m+1:k}},\B \A^{\v'_{1:m},\u'_{m+1:k}}
        \right\rangle^2
    }
    \\
    &\leq \frac{1}{n^2} \Vert\B\Vert_{\mathrm{F}}
    \sqrt{\sum_{\u',\v'}\Vert\B\Vert^2
     \Vert\A^{\u'_{1:m},\v'_{m+1:k}}\Vert_{\mathrm{F}}^2\Vert\A^{\v'_{1:m},\u'_{m+1:k}}\Vert_{\mathrm{F}}^2}
     \\
      &\leq \frac{\Vert \B\Vert \Vert\B\Vert_{\mathrm{F}}}{n^2} \sqrt{\prod_{i=1}^k\Vert\Sigma^{(i)}\Vert_{\mathrm{F}}^4}
     \\
      &\leq \frac{\sqrt{\prod_{i=1}^k d_i}}{n^2} \Vert\B\Vert^2\prod_{i=1}^k d_i
      \prod_{i=1}^k\Vert \Sigma^{(i)}\Vert^2
      \leqslant 
      C^{2k}\Vert \B\Vert^2\frac{\vert \db\vert^{\frac{3}{2}}}{n^2}
      \end{aligned}
      \]  
   where we used the fact that
     $\Vert\Sigma^{(i)}\Vert_{\mathrm{F}} \leq \sqrt{d_i} \Vert\Sigma^{(i)}\Vert$ and $\Vert\B\Vert_{\mathrm{F}}\leqslant \sqrt{\vert \db\vert}\Vert \B\Vert.$ We obtain the final bound on noticing that there are $2^k$ terms in the expansion.

\end{proof}

We now extend the previous lemma by adding IV terms in the product in \eqref{eq:quaddev}
\begin{lemma}\label{lem:IV}
    we have
    \begin{multline*}
        \frac{1}{n^2}\sum_{\u,\v,\u',\v'}
        \left(
            \prod_{i=1}^k \II^{(i)}_{}+\III^{(i)}+\IV^{(i)}
        \right)_{\u\v\u'\v'}b_{\u\v}b_{\u'\v'}\ 
        \\\leqslant 
        2^kC^{2k}\Vert \B\Vert^2\frac{\vert \db\vert^{\frac{3}{2}}}{n^2}
        +
        (3^k-2^k)\kappa_4^{k}\Vert \B\Vert^2
        \sup_{1\leqslant \ell\leqslant k-1}
        \left(\frac{C^2}{\kappa_4}\right)^{\ell}
        \sup_{1\leqslant i_1,\dots,i_\ell\leqslant k}
        \frac{\vert \db\vert \prod_{j=1}^\ell d_{i_j}}{n^2}
        \end{multline*}
\end{lemma}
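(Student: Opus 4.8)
The plan is to expand the product $\prod_{i=1}^k\left(\II^{(i)}+\III^{(i)}+\IV^{(i)}\right)$ occurring in \eqref{eq:quaddev} into its $3^k$ monomials and to estimate each one by reading the associated sum as a quadratic form in $\B$. Setting $b=\mathrm{vec}(\B)\in\R^\db\otimes\R^\db$, so that $\Vert b\Vert_2=\Vert\B\Vert_{\mathrm F}$, a monomial with factors $F^{(i)}\in\{\II^{(i)},\III^{(i)},\IV^{(i)}\}$ is exactly $\frac{1}{n^2}\langle b,\M b\rangle$, where $\M=\bigotimes_{i=1}^k F^{(i)}$ and each $F^{(i)}$ is viewed as an operator on $\Rbb^{d_i}\otimes\Rbb^{d_i}$ sending the pair $(u_i',v_i')$ to $(u_i,v_i)$. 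Since the operator norm of a tensor product factorizes, $\Vert\M\Vert=\prod_{i=1}^k\Vert F^{(i)}\Vert$, and $\lvert\langle b,\M b\rangle\rvert\le\Vert\B\Vert_{\mathrm F}^2\,\Vert\M\Vert\le\vert\db\vert\,\Vert\B\Vert^2\,\Vert\M\Vert$, the whole estimate reduces to the operator norms of the individual factors.

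These two families of factors behave very differently. When $F^{(i)}=\II^{(i)}$ the operator is $\Sigma^{(i)}\otimes\Sigma^{(i)}$, and when $F^{(i)}=\III^{(i)}$ it is the same composed with the coordinate swap; in either case $\Vert F^{(i)}\Vert=\Vert\Sigma^{(i)}\Vert^2\le C^2$, with no dependence on $d_i$. When $F^{(j)}=\IV^{(j)}$ the kernel is the fourth cumulant $\kappa(x_{u_j}^{(j)},x_{v_j}^{(j)},x_{u_j'}^{(j)},x_{v_j'}^{(j)})$, and the Schur test together with the symmetry of the cumulant — precisely the computation already performed in the proof of Lemma \ref{lem:k=1} — gives $\Vert F^{(j)}\Vert\le\kappa_4^{(j)}\le\kappa_4$, again independent of $d_j$. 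Hence a monomial carrying $\II/\III$ factors on a set $S$ of size $\ell$ and $\IV$ factors on $S^c$ is bounded by $\frac{\vert\db\vert}{n^2}\,C^{2\ell}\kappa_4^{k-\ell}\Vert\B\Vert^2$.

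It remains to sum over monomials. The $2^k$ monomials whose factors are all of type $\II/\III$ are the content of Lemma \ref{lem:II-III}; a fortiori each is at most $\frac{\vert\db\vert}{n^2}C^{2k}\Vert\B\Vert^2\le\frac{\vert\db\vert^{3/2}}{n^2}C^{2k}\Vert\B\Vert^2$, and together they reproduce the first term. Each of the remaining $3^k-2^k$ monomials carries at least one $\IV$ factor, so its number of $\II/\III$ factors is $\ell\le k-1$; rewriting $C^{2\ell}\kappa_4^{k-\ell}=\kappa_4^k(C^2/\kappa_4)^\ell$ and enlarging $\vert\db\vert$ to $\vert\db\vert\prod_{i\in S}d_i$ places those with $1\le\ell\le k-1$ under the supremum of the second term (the lone all-$\IV$ monomial, $\ell=0$, being smaller still), and multiplying by $3^k-2^k$ closes the bound. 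I would stress that the dimension factors $\prod_{i\in S}d_i$ in the statement are in fact slack: the operator-norm estimate already yields $\frac{\vert\db\vert}{n^2}\asymp\vert\db\vert^{-1}$ per monomial, uniformly in $\ell$. The feature genuinely needed for Proposition \ref{prop:concquad} is only that the Schur bound makes every $\IV$ factor dimension-free, so that after dividing by $n^2\asymp\gamma^2\vert\db\vert^2$ every monomial is $\O{1/d_{\min}}$.

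The step I expect to demand the most care is the treatment of the fourth-cumulant factors. A factor $\IV^{(j)}$ genuinely couples all four of its indices $u_j,v_j,u_j',v_j'$ and does not split into two rank-one pieces the way $\II^{(j)}$ and $\III^{(j)}$ do, so it cannot be absorbed into the $\Sigma$-contractions; the only economical way to handle it is to regard it as an operator on $\Rbb^{d_j}\otimes\Rbb^{d_j}$ and control its operator norm by $\kappa_4$ through the Schur test, rather than its Frobenius norm, which grows linearly in $d_j$ and would destroy the estimate. Making this operator viewpoint precise — identifying each per-index kernel as a genuine operator on $\Rbb^{d_i}\otimes\Rbb^{d_i}$, checking that the monomial really is the associated tensor-product quadratic form, and verifying that the Schur bound of Lemma \ref{lem:k=1} applies verbatim to the $(u_j',v_j')\mapsto(u_j,v_j)$ ordering of indices — is the only delicate bookkeeping; once it is in place, the dimension count and the role of the constraint $\ell\le k-1$ are immediate.
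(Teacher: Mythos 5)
Your proof is correct, but it takes a genuinely different route from the paper's. The paper bounds the whole kernel $\mathcal{K}_{\u\v\u'\v'}=\prod_i F^{(i)}_{u_iv_iu_i'v_i'}$ at once by a single global Schur test, $\Vert\mathcal{K}\Vert_{\mathrm{F}\to\mathrm{F}}\le\sup_{\u,\v}\sum_{\u',\v'}\vert\mathcal{K}_{\u\v\u'\v'}\vert$; this is lossy on the $\Sigma$-type factors, since $\sup_{u_i,v_i}\sum_{u_i',v_i'}\vert\sigma^{(i)}_{u_iu_i'}\sigma^{(i)}_{v_iv_i'}\vert$ can only be bounded by $d_i\Vert\Sigma^{(i)}\Vert^2$, and this is exactly where the factors $\prod_{j}d_{i_j}$ in the stated bound come from. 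You instead exploit the tensor factorization $\mathcal{K}=\bigotimes_i F^{(i)}$ (under the unitary reshuffling $\R^\db\otimes\R^\db\cong\bigotimes_i(\Rbb^{d_i}\otimes\Rbb^{d_i})$), use multiplicativity of the operator norm under tensor products, bound the $\II^{(i)}/\III^{(i)}$ factors by their exact operator norms ($X\mapsto\Sigma^{(i)}X\Sigma^{(i)}$ and $X\mapsto\Sigma^{(i)}X^\top\Sigma^{(i)}$, both of norm $\le C^2$, dimension-free), and reserve the Schur test for the $\IV^{(j)}$ factors alone, where it gives the dimension-free bound $\kappa_4$. This yields the per-monomial bound $\frac{\vert\db\vert}{n^2}C^{2\ell}\kappa_4^{k-\ell}\Vert\B\Vert^2$, uniformly in $\ell$, which is strictly sharper than the paper's $\frac{\vert\db\vert\prod_{j=1}^\ell d_{i_j}}{n^2}C^{2\ell}\kappa_4^{k-\ell}\Vert\B\Vert^2$ (and, as you note, would also sharpen Lemma \ref{lem:II-III} from $\vert\db\vert^{3/2}$ to $\vert\db\vert$); the lemma's stated bound then follows a fortiori by enlarging. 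What the paper's cruder route buys is brevity — one Schur test, no operator identifications — and the loss is harmless for Proposition \ref{prop:concquad} because $\ell\le k-1$ keeps the error at $\mathcal{O}(1/d_{\min})$; your route buys a cleaner and stronger estimate, at the cost of the bookkeeping you correctly identify (checking that the monomial is the tensor-product quadratic form and that transposition preserves the Frobenius norm). One small shared imprecision: absorbing the all-$\IV$ monomial ($\ell=0$) into a supremum over $1\le\ell\le k-1$ requires $C^2 d_i/\kappa_4\ge1$, which holds only for $n$ large since $d_i\to\infty$; the paper's own proof glosses over this in exactly the same way, so it is not a gap specific to your argument.
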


\begin{proof}
    Without loss of generality, we consider the following form
    \begin{equation}\label{eq:form1}
     \frac{1}{n^2}\sum_{\u,\v,\u',\v'}
     \left(
        \prod_{i=1}^m \II^{(i)}_{u_iv_iu'_iv'_i} \prod_{j=m+1}^{\ell}\III^{(j)}_{u_jv_ju'_jv'_j}
        \prod_{q=\ell+1}^{k}\IV^{(q)}_{u_{q}v_{q}u'_{q}v'_{q}}
    \right)b_{\u\v}b_{\u'\v'}
    \end{equation}
     as the other forms follow by symmetry. Note that the case where no $\IV$ terms are chosen was treated in Lemma \ref{lem:II-III}, and we therefore assume that at least one term is chosen to be a $\IV$ term. Hence the case $m=0$ consists in only $\III^{(i)}$ and $\IV^{(i)}$ terms and $m = \ell$ consists in only $\II^{(i)}$ and $\IV^{(i)}$ terms.
      \\  By definition, we have 
    \[
    \eqref{eq:form1}
    =
    \frac{1}{n^2}\sum_{\u,\v,\u',\v'}\prod_{i=1}^m \sigma_{u_iu_i'}^{(i)}\sigma_{v_iv_i'}^{(i)} \prod_{j=m+1}^{\ell}\sigma_{u_jv_j'}^{(j)}\sigma_{u_j'v_j}^{(j)}
    \prod_{q=l+1}^{k}
    \kappa\left(x_{u_q}^{(q)},x_{v_q}^{(q)},x_{u'_q}^{(q)},x_{v'_q}^{(q)}\right)
    b_{\u\v}b_{\u'\v'}
    \]
    if we denote 
    \[
    \mathcal{K}_{\u\v\u'\v'} = \prod_{i=1}^m \sigma_{u_iu_i'}^{(i)}\sigma_{v_iv_i'}^{(i)} \prod_{j=m+1}^{\ell}\sigma_{u_jv_j'}^{(j)}\sigma_{u_j'v_j}^{(j)}
    \prod_{q=\ell+1}^{k}
    \kappa\left(x_{u_q}^{(q)},x_{v_q}^{(q)},x_{u'_q}^{(q)},x_{v'_q}^{(q)}\right)
    \]
    then we can write $\vert \eqref{eq:form1} \vert$
    as 
     \begin{equation}\label{eq:form2}
     \vert \eqref{eq:form1} \vert 
    =
    \frac{1}{n^2} 
    \vert \langle \B,\mathcal{K}\B\rangle\vert \leqslant
    \frac{1}{n^2} \Vert \B\Vert_F^2 \Vert \mathcal{K}\Vert_{\mathrm{F}\to\mathrm{F}} 
    \end{equation}
    using Schur test, and exploiting the symmetry of $\mathcal{K}$
    \[
    \begin{aligned}
     \Vert \mathcal{K}\Vert_{\mathrm{F}\to\mathrm{F}}
    &\leqslant 
        \sqrt{
            \left(\sup_{\u,\v}\sum_{\u',\v'}^{}\vert \mathcal{K}_{\u\v\u'\v'}\vert\right)\left(\sup_{ \u',\v'}\sum_{\u,\v}^{}\vert \mathcal{K}_{\u\v\u'\v'}\vert\right)
        } 
        = \sup_{\u,\v}\sum_{\u',\v'}^{}\vert \mathcal{K}_{\u\v\u'\v'}\vert
    \\
    &\leqslant \kappa_4^{k-\ell}
    \sup_{\u_{1:\ell},\v_{1:\ell}}\sum_{\u_{1:\ell}',\v_{1:\ell}'}^{}
    \left\vert
        \prod_{i=1}^m \sigma_{u_iu_i'}^{(i)}\sigma_{v_iv_i'}^{(i)} \prod_{j=m+1}^{l}\sigma_{u_jv_j'}^{(j)}\sigma_{u_j'v_j}^{(j)}
    \right\vert \\
    &\leqslant  \kappa_4^{k-\ell} C^{2\ell}
    \prod_{i=1}^{\ell} d_i 
    \end{aligned}
    \]
    where we used the fact that 
    \[
     \forall 1 \leq i \leq k 
     \sup_{1\leq\u_{i},\v_{i}\leq   d_i}\sum_{\u_i',\v_i' =  1}^{d_i}\left\vert  \sigma_{u_iu_i'}^{(i)}\sigma_{v_iv_i'}^{(i)}
    \right\vert \quad , \quad  \sup     _{1\leq\u_{i},\v_{i}\leq d_i}\sum_{\u_i',\v_i' = 1}^{d_i}\left\vert            \sigma_{u_iv_i'}^{(i)}\sigma_{u_i'v_i}^{(i)}
    \right\vert \quad\leq d_i \Vert \Sigma^{(i)}\Vert^2  \
    \]
    then we have
    \[
    \begin{aligned}
      \eqref{eq:form2}  \leqslant 
      \frac{\kappa_4^{k-\ell} C^{2\ell}}{n^2} \Vert \B\Vert_F^2 
      \prod_{i=1}^{\ell} d_i 
      \leqslant \kappa_4^{k-\ell}C^{2\ell}\Vert \B\Vert^2\frac{\vert \db\vert \prod_{i=1}^\ell d_i}{n^2}.
    \end{aligned}
    \]
    We obtain the final bound by setting optimzing in the subset of $d_i$ chosen and noticing that there are $3^k - 2^k$ terms in the expansion.
\end{proof}
We are now ready to prove Proposition \ref{prop:concquad}.
\begin{proof}[Proof of Proposition \ref{prop:concquad}]
    We simply have to combine Lemmas \ref{lem:k=1}, \ref{lem:I} ,\ref{lem:II-III}, and \ref{lem:IV}. Lemmas \ref{lem:I} and \ref{lem:k=1} handle all terms involving a $\I$ term. Lemma \ref{lem:II-III} handle all combinations of $\II$ and $\III$ terms while Lemmas \ref{lem:IV} handle all terms involving a $\IV$ term. We just have to consider the largest error from all these lemmas and we see that it is $\O{\frac{1}{\sqrt{\vert \db\vert}}+\frac{1}{d_{\min}}}$ where $d_{\min} = \min_{1\leqslant i\leqslant k} d_i$. As soon as $k\geqslant 2$, we see that the largest of the two terms is $\O{\frac{1}{d_{\min}}}$ and the case $k=1$ is given by Lemma \ref{lem:k=1} which gives a $\O{\frac{1}{d_1}}=\O{\frac{1}{d_{\min}}}$ since there is only one term. We thus obtain the final result.
\end{proof}
We now are able to prove our main theorem.
\begin{proof}[Proof of Theorem \ref{theo:main}]
    This is now an application of Theorem \ref{theo:baizhou}. By Lemma \ref{lem:covtensor2}, the covariance tensor is uniformly bounded and its limiting spectrum is given by $\mu_1 \circledast \dots\circledast \mu_k$ and Proposition \ref{prop:concquad} gives us the concentration of quadratic forms to apply Theorem \ref{theo:baizhou}. Finally we obtain that 
    \[
    \mathrm{limspec}\left(  
        \bigodot_{i=1}^k \frac{1}{d_i}X^{(i)}{X^{(i)}}^\top
    \right)
    =
    \mu_{\mathrm{MP}}^{\gamma} \boxtimes \left(
    \mu_1\circledast \dots\circledast \mu_k
    \right).
    \]
\end{proof}
%\appendix 
%\appendixswitch
%\renewcommand{\thesection}{\Alph{section}}
%\section{Proof of Lemma \ref{lem:tightness}}\label{app:prooftight}
\bibliographystyle{abbrv}
\bibliography{bibliostage}
\end{document}